\newcommand{\I}              {\mathrm{i}}
\newcommand{\E}              {\mathrm{e}}
\newcommand{\D}              {\mathop{}\!\mathrm{d}}
\newcommand{\at}[1]          {\big|_{#1}}
\newcommand{\argument}       {\,\cdot\,}
\newcommand{\pr}             {\mathrm{pr}}
\newcommand{\functor}[1]     {\mathsf{#1}}
\DeclareMathOperator{\diag}  {\mathrm{diag}}
\DeclareMathOperator{\spann} {\mathrm{span}}
\newcommand{\DFT}{\mathsf{DFT}}
\newcommand{\Ideal}{\functor{I}}
\newcommand{\Variety}{\functor{V}}
\newcommand{\Fourier}{\mathcal{F}}
\DeclarePairedDelimiter{\ideal}{\langle \;}{{\;} \rangle}
\DeclarePairedDelimiter{\dualpair}{(}{)}
\newcommand\reallywidecheck[1]{%
\savestack{\tmpbox}{\stretchto{%
  \scaleto{%
    \scalerel*[\widthof{\ensuremath{#1}}]{\kern-.6pt\bigwedge\kern-.6pt}%
    {\rule[-\textheight/2]{1ex}{\textheight}}
  }{\textheight}%
}{0.5ex}}%
\stackon[1pt]{#1}{\scalebox{-1}{\tmpbox}}%
}
\newcommand{\algebra}[1]      {\mathscr{#1}}
\newcommand{\acts}            {\mathbin{\triangleright}}
\newcommand{\roots}{R}
\newcommand{\simpleroots}{\Delta}
\DeclarePairedDelimiter{\SP} {\langle}{\rangle}
\DeclareMathOperator{\mdeg}  {{\mathrm{deg}}_{m}}
\newcommand{\polynomials}[2][]{\Pi_{#1}^{#2}}
\newcommand{\orthopolys}[2][]{\mathcal{V}_{#1}^{#2}}
\let\@@mod\mod
\DeclareRobustCommand{\mod}{\@ifstar\@mods\@@mod}
\def\@mods{\mkern4mu{\operator@font mod}\mkern6mu}
\title{FFT and orthogonal discrete transform on weight lattices of
  semi-simple Lie groups\thanks{This work is partially supported by
    the European Regional Development Fund (ERDF).} } \subtitle{An
\titlerunning{FFT on weight lattices}        
\author{Bastian Seifert}
\institute{B. Seifert \at
  Ansbach University of Applied Sciences,
  Center for Signal Analysis of Complex Systems,
  91510 Ansbach,
  Germany \\  
  \emph{and} \\
  Julius-Maximilians-Universität Würzburg,
  Institute for Mathematics,
  97074 Würzburg,
  Germany \\
  Tel.: +49 (0)981 203633-28\\
  \email{bastian.seifert@hs-ansbach.de}           
}
\date{Received: date / Accepted: date}
\begin{document}

\maketitle

\begin{abstract}
    We give two algebro-geometric inspired approaches to fast
    algorithms for Fourier transforms in algebraic signal processing
    theory based on polynomial algebras in several variables. One is
    based on module induction and one is based on a decomposition
    property of certain polynomials. The Gauss-Jacobi procedure for
    the derivation of orthogonal transforms is extended to the
    multivariate setting. This extension relies on a multivariate
    Christoffel-Darboux formula for orthogonal polynomials in several
    variables. As a set of application examples a general scheme for
    the derivation of fast transforms of weight lattices based on
    multivariate Chebyshev polynomials is derived. A special case of
    such transforms is considered, where one can apply the
    Gauss-Jacobi procedure.

    \keywords{algebraic signal processing theory \and
      Christoffel-Darboux formula \and discrete cosine transform \and
      fast Fourier transform \and Gauss-Jacobi procedure \and
      multivariate Chebyshev polynomials \and orthogonal polynomials
      \and representation theory of algebras \and root systems \and
      weight lattices}
 \subclass{65T50 \and 15A23 \and 33F99 \and 68R01 \and 16G99}
\end{abstract}



\section{Introduction}
\label{sec:Introduction}%

The popularization of the fast Fourier transform (FFT) algorithm by
Cooley and Tukey~\cite{Cooley.Tukey:1965} paved the way to productive
applications of the discrete Fourier transform. Due to its numerous
applications the fast Fourier transform has been termed to be one of
the most important algorithms of the twentieth century. The usage of
algebra in the theory of fast Fourier transform algorithms dates back at
least until the work of Nicholson~\cite{Nicholson:1971a}. Algebraic
approaches to FFT algorithms split into two main directions: group
algebra and polynomial algebra approaches. The interpretation of the
fast Fourier transform in terms of the cyclic group $Z_n$ was
introduced in~\cite{Nicholson:1971a}. This group-based approach
allows for a generalization of FFT algorithms to non-abelian groups
as in \cite{Diaconis.Rockmore:1990a}. The polynomial algebra
approach relies on the insight that there exists an isomorphism of
algebras $\mathbb{C}[Z_n] \cong \mathbb{C}[x] \big/ \ideal{x^n - 1}$.
This approach allows to study another large class of FFT
algorithms~\cite{Beth:1984a,Heideman.Burrus:1986a,Johnson.Burrus:1985a}
relying on ideas of Nussbaumer~\cite{Nussbaumer:1982a} and
Winograd~\cite{Winograd:1979a}. The full polynomial algebra approach
was worked out in
\cite{Pueschel.Moura:2006,Pueschel.Moura:2008a,Pueschel.Moura:2008b,Pueschel.Moura:2008c}
leading to algebraic signal processing theory. This theory captures
not only the derivation of fast algorithms for different signal models
but treats the most important concepts from linear signal processing,
e.g.\ $z$-transform, signals, filters, and Fourier transform,
algebraically, as well. One main difference in algebraic signal
processing when compared to other recent approaches, like the decomposition of
semi-simple algebras using Bratelli-diagramms in
\cite{Maslen.Rockmore.Wolff:2017a}, is that in algebraic signal
processing one decomposes \emph{modules}. This is motivated by the
fact that in algebraic signal processing theory the signals are
modeled as a module over the algebra of filters. This approach
immediately leads to explicit matrix factorizations.

In algebraic signal processing theory one can identify three
approaches for the derivation of fast algorithms for Fourier
transforms of algebraic signal models based on polynomials. Even
though all three approaches are essentially based on the Chinese
remainder theorem and a stepwise partial decomposition, the different
details lead to algorithms of different complexity.

The first one is based on a factorization of polynomials
$f(x) = g(x) \cdot h(x)$. This approach requires no special conditions
on the polynomial $f$ but leads to sub-optimal $O(n \log^2 n)$
algorithms.

The second approach is based on the decomposition property
$f(x) = p(q(x))$ of certain polynomials. This approach gives optimal
$O(n \log n)$ algorithms. Unfortunately in one variable the only
families of polynomials posessing this property are, up to
affine-linear coordinate changes, the monomials $x^n$ and the
Chebyshev polynomials $T_n(x)$ \cite{Rivlin:1974a}. Hence the only
$O(n \log n)$ algorithms for signal models based on polynomials in one
variable derivable by this method are the Cooley-Tukey-type
algorithms for the trigonometric, i.e. sine and cosine transforms
associated to Chebyshev polynomials, and the discrete Fourier
transform, associated to the monomials. In several variables it is not
known~\cite{Vesolov:1991a} if there are, up to affine-linear
coordinate changes, any examples of polynomials with this property
except the monomials and multivariate Chebyshev polynomials. The
second approach in combination with multivariate Chebyshev polynomials
was used to derive fast algorithms for undirected
hexagonal~\cite{Pueschel.Roetteler:2008} and FCC
lattices~\cite{Seifert.Hueper.Uhl:2018a}.

As there are other algorithms for the discrete Fourier transform, like
the Britanak-Rao-FFT~\cite{Britanak.Rao:1999a}, one might wonder if
these algorithms can be derived using algebraic signal processing
theory. This question was solved using the third approach. Here one
relies on induced modules. This approach raises the level of
abstraction by not relying on properties of the polynomials but on
properties of the signal modules. Module induction is based on an
algebra $\algebra{A}$ with subalgebra $\algebra{B}$ and a finite set
$T \subset \algebra{A}$, the transversal, such that
$\algebra{A} = \bigoplus_{t \in T} t \algebra{B}$. The induced
$\algebra{A}$-module $N$ of a $\algebra{B}$-module $M$ is
$N = \bigoplus_{t \in T} t \acts M$, where $\acts$ denotes the action
of the algebra on the module. In
\cite{Sandryhaila.Kovacevic.Pueschel:2011} this approach was worked
out for polynomial algebras in one variable with regular modules. As
applications general-radix algorithms for the
Britanak-Rao~\cite{Britanak.Rao:1999a} and the
Wang-FFT~\cite{Wang:1984a} were deduced.

The first part of this article extends the third and second approach
to polynomial algebras in several variables. For this a very general
decomposition of modules is used to derive a decomposition of the
Fourier marices. This is necessary since unlike in the univariate case
for multivariate polynomials the decomposition property in general
does not yield an induction. We given an interpretation of the
underlying mechanisms in the language of algebraic geometry, as well.
This is useful since it clarifies many aspects of the theory. As
an application example it is shown how one can derive the FFT for a
directed hexagonal lattice in this setting. By deriving this fast
algorithm for the directed hexagonal lattice it is illustrated how to
rederive the fast algorithms of Mersereau and
Speake~\cite{Mersereau.Speake:1981a} for regular directed lattices
within algebraic signal processing theory.

The connection of orthogonal polynomial transforms and univariate
orthogonal polynomials is well-known~\cite{Yemini.Pearl:1979a}. Using
the Christoffel-Darboux formula the Gauß-Jacobi-procedurce
of~\cite{Yemini.Pearl:1979a} allows to derive an orthogonal version of
any discrete polynomial transform based on orthogonal polynomials.
Even though there is a rather mature theory of orthogonal polynomials
in several variables~\cite{Xu:2017a} the connection to signal
processing is not vivid in the literature. Only recently the author
used the multivariate Christoffel-Darboux formula of
Xu~\cite{Xu:1993a} to derive an orthogonal version of a discrete
cosine transform on lattices of triangles~\cite{Seifert.Hueper:2018b}.
Unfortunately this method does not work in every case but relies on
the same condition as the existence of a Gaussian cubature formula as
zeros of the orthogonal polynomials. This is deplorable since Gaussian
cubatures rarely exist~\cite{Li.Xu:2010}. In the second part we derive
the general orthogonalization scheme and show that the existence of
such a Gaussian cubature implies the existence of an orthogonal
discrete transform for the signal model corresponding to the
polynomials used to construct the cubature.

Fast transforms for regular undirected lattices have been derived for
the hexagonal lattice~\cite{Pueschel.Roetteler:2008} and for the FCC
lattice~\cite{Seifert.Hueper.Uhl:2018a}. Both algorithms are special
cases of a whole family, based on generalization of the Chebyshev
polynomials to multivariate polynomials intimately connected to Lie
theory. One of the first attempts to study these polynomials in two
variables was~\cite{Koornwinder:1974}. The multivariate version was
first defined in \cite{Lidl:1975}. Important properties were deduced
in \cite{Eier.Lidl:1982}. The semigroup property was first proven in
\cite{Ricci:1986}. None of these approaches realised the connection to
Lie groups, which was clarified in \cite{Hoffman.Withers:1988a}.
Although Chebyshev polynomials in one variable are ubiquitous in
applied mathematics, their multivariate counterparts only recently
started to penetrate into applications. Meanwhile, there are
applications to the discretization of partial differential
equations~\cite{Munthe-Kaas:2006,Ryland.Munthe-Kaas:2011,Munthe-Kaas.Nome.Ryland:2012},
cubature
formulas~\cite{Li.Xu:2010,Moody.Patera:2011,Hrivnak.Motlochova.Patera:2016}
and discrete
transforms~\cite{Atoyan.Patera:2007,Hrivnak.Motlochova:2014,Hrivnak.Motlochova:2018a}.
From an algebraic signal processing perspective they are interesting
for two reasons. First they are examples of multivariate polynomials
with the decomposition property. Thus the multivariate Chebyshev
polynomials yield application examples of the generalized second
approach to fast Fourier transforms. Second they are intimately
connected to weight lattices of semi-simple Lie groups. As some of the
weight lattices are associated to densest sphere
packings~\cite[Ch.~4]{Conway.Sloane:1999}, the multivariate Chebyshev
polynomials give in these cases rise to fast transforms of optimally
sampled
signals~\cite{Petersen.Middleton:1962,Kuensch.Agrell.Hamprecht:2005}.

We derive fast algorithms in the cases of Chebyshev polynomials
associated to Lie algebras of type $A_2$ and $C_2$. Furthermore we
show that in the case $C_2$ the multivariate Gauss-Jacobi procedure is
applicable.

The main contributions of this paper are as follows. In
Sect.~\ref{sec:CooleyTukey} the induction-based
approach~\cite{Sandryhaila.Kovacevic.Pueschel:2011} and the approach
relying on the decomposition property~\cite{Pueschel.Roetteler:2008}
for the derivation fast algorithms in algebraic signal processing
theory are extended to a more general situation and polynomials in
several variables. Furthermore it is shown that in the multivariate
case the decomposition property yields another decomposition theorem
for Fourier transforms. In Sect.~\ref{sec:ChristoffelDarboux} a
generalization of the Gauß-Jacobi-procedure for the derivation of
orthogonal transforms is derived. Finally in
Sect.~\ref{sec:UndirectedLattices} we state a general scheme for the
derivation of fast transform algorithms for undirected weight lattices
of semi-simple Lie algebras based on multivariate Chebyshev
polynomials.

\section{An algebro-geometric perspective on signal processing and
  FFT} 
\label{sec:CooleyTukey}%

Algebraic signal processing theory enlightens the algebraic structures
underlying linear signal processing
technqiues~\cite{Pueschel.Moura:2006,Pueschel.Moura:2008a,Pueschel.Moura:2008b}.
From a signal processing perspective the algebraic structures can be
motivated as follows. If one considers the basic operations on
filters, i.e. putting them in series and parallel and amplifying them,
one can interpret these operations as addition, multiplication and
scalar multiplication, respectively. One observes that these
operations are subject to a distributive law. Consequently the filters can
mathematically be described by an algebra with respect to these
operations. Furthermore, one can add and amplify signals, and we can
apply filters to signals. From a mathematical point of view one thus
gets the structure of a module over the algebra of filters for the
signals, with application of filters to signals as algebra action. The
$z$-transform is a bijective mapping from a set of numbers, the
samples, to signals, which embody more structure. In this way, the
$z$-transform tells how to translate samples to signals.

An \emph{algebraic signal model}, a triple $(\algebra{A}, M, \Phi)$
consisting of a $\mathbb{C}$-algebra $\algebra{A}$, a free
$\algebra{A}$-module $M$, and a bijective mapping
$\Phi \colon \mathbb{C}^n \to M$ for some
$n \in \mathbb{N} \cup \{\infty\}$. By the previous considerations
this is motivated as the main object to study in algebraic signal
processing theory.

We recall the first example of algebraic signal processing theory, the
classical finite time discrete signal processing. In the classical
theory one considers a set of numbers
$s = (s_0,\dots,s_{n-1}) \in \mathbb{C}^n$ as signal and extends it
periodically, i.e. $s_N = s_{N \mod* n}$ for any $N \in \mathbb{Z}$. A
set of samples $s \in \mathbb{C}^n$ is mapped to a polynomial in
$x = z^{-1}$ by the $z$-transform
\begin{equation}
    \label{eq:DSPExample}
    \Phi_d \colon (s_0, \dots, s_{n-1}) \mapsto \sum_{i=0}^{n-1} s_i x^{i}.
\end{equation}
To capture the periodic extension of the signal, one considers the
resulting polynomials modulo $x^n - 1$ or, more precisely, modulo the
ideal generated by $x^n - 1$. This results in an element of the set
$\mathbb{C}[x] \big/ \ideal{x^{n} - 1}$. The filters in classical
signal processing are generated by a shift. A shift is realized on the
polynomials as multiplication by $x$
\begin{equation}
    \label{eq:ShiftInDSP}
    x \cdot \Phi(s)
    = x \sum_{i=0}^{n-1} s_i x^i
    = \sum_{i=0}^{n-1} s_{i-1 \mod* n} x^i.
\end{equation}
This results in a delay of the signal. The filters are the polynomials
in the shift $x$, i.e. elements of
$\mathbb{C}[x] \big/ \ideal{x^n - 1}$. The structural difference
between signal and filters is the algebraic structure. The set of
filters is equipped with the structure of an algebra, while the
signals form a module over the algebra of filters. In this example we
thus get the following structures. Let
$h_1, h_2 \in \mathbb{C}[x] \big/ \ideal{x^n - 1}$ be two filters and
$c \in \mathbb{C}$, then $h_1(x) \cdot h_2(x)$, $h_1(x) + h_2(x)$ and
$c \cdot h_i(x)$ form new filters. For two signals
$s_1,s_2 \in \mathbb{C}[x] \big/ \ideal{x^n - 1}$ only
$s_1(x) + s_2(x)$ and $c \cdot s_1(x)$ form new signals, i.e. one has
the structure of a vector space. But additionally
$h_i(x) \cdot s_i(x)$ forms a new filter, as well, which turns the
signals into a module over the algebra of filters. Hence the finite
time discrete signal processing translates to the model
$(\algebra{A}_d, M_d, \Phi_d)$, with underlying sets
$\algebra{A}_d = M_d = \mathbb{C}[x] \big/ \ideal{x^n - 1}$, in
algebraic signal processing theory.

A signal model can be visualized by a graph. The visualization is
given by the following construction. First one associates to each
basis element of the module a node. An edge from one node to another
is added if the result of the action of a generator of the algebra,
i.e. a shift, on the basis element associated to the first node
contains non-zero coefficient to the basis element of the second node.
The visualization of the discrete finite time model is shown in
Fig.~\ref{fig:FiniteDiscreteTimeModel}.
\begin{figure}
    \centering
    \includegraphics[width=0.8\textwidth]{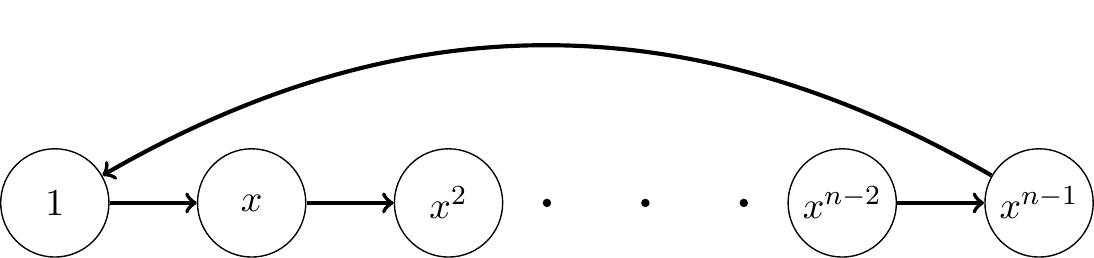}
    \caption{Visualization of the finite, discrete time signal model.}
    \label{fig:FiniteDiscreteTimeModel}
\end{figure}
Since the boundary conditions of two- or three-dimensional models
tend to lead to confusing pictures, the boundary connections are often
omitted. Note that these visualization graphs motivated a path to
signal processing on graphs see e.g.\ \cite{Sandryhaila.Moura:2013a}.

An algebraic signal model gives rise to a notion of Fourier transform
based on the decomposition of the signal module. Assume
$M \cong \bigoplus_i M_i$ can be decomposed into irreducible modules.
Any isomorphism
\begin{equation}
    \label{eq:FourierTransform}
    \Fourier \colon M \to \bigoplus_i M_i
\end{equation}
is called a \emph{Fourier transform} for the associated algebraic
signal model. For the finite time discrete signal processing model one
has the decomposition
\begin{equation}
    \label{eq:FourierTransformDiscreteFiniteTimeSignals}
    \mathbb{C}[x] \big/ \ideal{x^N - 1}
    \cong \bigoplus_{j=0}^{N-1} \mathbb{C}[x] \big/ \ideal{x - \E^{2
        \pi \I j/N}}.
\end{equation}
Choosing as basis $[1,x,\dots,x^{N-1}]$ in
$\mathbb{C}[x] \big/ \ideal{x^N - 1}$ and $[1]$ in each
$\mathbb{C}[x] \big/ \ideal{x - \E^{2 \pi \I j/N}}$ the
isomorphism~\eqref{eq:FourierTransformDiscreteFiniteTimeSignals} can be
realized by the $N \times N$-matrix
\begin{equation}
    \label{eq:FourierTransformDiscreteFiniteTimeSignalsMatrix}
    \left[ \E^{2 \pi \I j \cdot k/N} \right]_{j,k},
\end{equation}
which is the discrete Fourier transform matrix.

Fast algorithms for these Fourier transforms rely on step-wise
application of the Chinese remainder theorem. We recall the basic
notations needed, for a more detailed treamtent see
e.g.~\cite[Ch.~II]{Lang:2002a}. Recall that ideals $I_1,I_2$ of a ring
$R$ are called coprime if $I_1 + I_2 = R$. Typical examples of coprime
ideals in the polynomial algebra $\mathbb{C}[x]$ are the ideals
$\ideal{x-a}$ and $\ideal{x-b}$ with scalars $a \not= b$. For any
commutative ring $R$ with ideal $I = I_1 \cap \dots \cap I_n$, such
that all $I_i$ are coprime, one has an isomorphism
\begin{equation}
    \label{eq:CRTIsoFancy}
    R \big/ I \longrightarrow R \big/ I_1 \times \dots \times R\big/
    I_n. 
\end{equation}
Tensoring this isomorphism with an $R$-module $M$ yields the Chinese
remainder theorem for modules
\begin{equation}
    \label{eq:CRTIsoFancyModules}
    M \big/ I M \longrightarrow M \big/ I_1 M \times \dots \times M
    \big/ I_n M. 
\end{equation}

We are especially interested in algebras of polynomials in several
variables. This is motivated by two considerations. First, group
algebras and algebras of polynomials in one variable have been
investigated already thoroughly. Second, algebras of polynomials in
several variables are intimately connected to algebraic geometry. This
allows one to get a geometric point of view for signal processing
concepts. One immediate obstacle using polynomials in multiple
variables is that their zero-sets are in general non-discrete. Hence
one has to be aware that we will consider only very special cases of
polynomial algebras.

For the geometric point of view recall the Hilbert
Nullstellensatz~\cite[Ch.~IX]{Lang:2002a}, which states that there is
a correspondence between ideals of a polynomial algebra and varieties.
A variety is a subset of $\mathbb{C}^n$, being the set of common zeros
of all polynomials in the ideal. This correspondence is not one-to-one
since for example $\ideal{x}$ and $\ideal{x^2}$ have the same variety
$\Variety(\ideal{x}) = \Variety(\ideal{x^2}) = \{0\}$. But the
correspondence becomes one-to-one if one restricts to radical ideals.
An ideal $I$ of an algebra $\algebra{A}$ is called radical if
$I = \sqrt{I} = \{ a^n \; | \; a \in I, n \in \mathbb{N}\}$. For
example the radical ideal of $\ideal{x^2}$ is $\ideal{x}$. Radicality
of an ideal is the several variables analog of square-freeness of
polynomials in one variable. An ideal is called zero-dimensional if
its variety $\Variety(I)$ is finite. \textbf{In this paper we assume
  all ideals to be radical and zero-dimensional.} If this is not the
case for some example, we will explicitly state that and use the
radical ideal.

Another problem with multivariate polynomials is that in general
division by the generators of an ideal is not well-defined. That is
one can get different results of the division by changing the sequence
of which generator to divide by. The crucial notion to avoid this
problem is that of a Gröbner basis. A Gröbner basis is a special set
of generators of an ideal depending on the choice a monomial order,
see \cite{Cox.Little.OShea:2015} for details. By the Buchberger
criterion a set of polynomials forms a Gröbner basis if their leading
monomials with respect to the choosen monomial order are disjoint. In
this paper we rely on this criterion only to decide whether a given
set of generators is a Gröbner basis.

Additionally to the algebra $\algebra{A}$ we are considering an
$\algebra{A}$-module. The geometric counterpart of a module over an
algebra is a vector bundle over a space. This is formalized by the
Serre-Swan theorem, see e.g.~\cite{Morye:2013a}. The Serre-Swan
theorem states that the sections of vector bundles are precisely the
projective, finitely generated modules over the algebra of functions
of the underlying space. So the module of signals of an algebraic
signal model can be interpreted as sections of vector bundles, cf.
Fig.~\ref{fig:SectionsVectorBundleCircle}.
\begin{figure}
    \centering
    \includegraphics[width=0.5\textwidth]{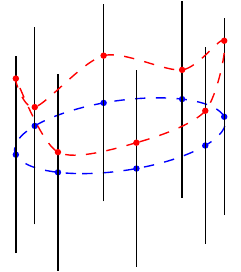}
    \caption{Sections of a vector bundle (red) over points on a circle
      (blue) form the signals of the finite, discrete time signal
      model.}
    \label{fig:SectionsVectorBundleCircle}
\end{figure}
\begin{remark}
    \label{remark:FiniteFields}%
    In principle one does not need to restrain to $\mathbb{C}$ as the
    ground field. Indeed signal processing and Fourier transforms
    using finite fields might be of interest in some applications as
    these can be used for infinite precision calculations, see
    e.g.~\cite{Lima.CampelloDeSouza:2011a}. Nonetheless we will only
    consider algebras over $\mathbb{C}$ in this paper since this
    simplifies some arguments and definitions. If all the coefficients
    and varieties of appearing polynomials are real, we consider the
    structures over $\mathbb{R}$ without any loss.
\end{remark}

We denote by $\polynomials{n}(x) = \mathbb{C}[x_1,\dots,x_n]$ the
space of all polynomials in $n$ indeterminates. Let the filter algebra
be of the form $\algebra{A} = \polynomials{n}(x) \big/ I$ for some
radical, zero-dimensional ideal $I$. Then by the Chinese remainder
theorem we have
\begin{equation}
    \label{eq:CRTFiniteVariety}
    \polynomials{n}(x) \big/ I
    \cong \bigoplus_{\alpha \in \Variety(I)}
    \polynomials{n} \big/ \ideal{x_{1} - \alpha_{1}, \dots,
      x_{n} - \alpha_{n}}, 
\end{equation}
as all the $ \ideal{x_1 - \alpha_{1}, \dots, x_n - \alpha_{n}}$ are
coprime as maximal ideals. The corresponding Fourier transform for the
signal model with regular module is realized by the map
\begin{equation}
    \label{eq:FourierTrafoFiniteVariety}
    p(x_1,\dots,x_n) \mapsto
    \begin{bmatrix}        
        p(\alpha)
    \end{bmatrix}_{\alpha \in \Variety(I)}.
\end{equation}
If we choose a basis $B$ in the module $M = \polynomials{n} \big/ I$
and the basis $[1]$ consisting of one only in each
$\polynomials{n} \big/ \ideal{x_1 - \alpha_{1}, \dots, x_n -
  \alpha_{n}}$, the Fourier transform can be realized as
multiplication with the matrix
\begin{equation}
    \label{eq:FourierTrafoFiniteVarietyMatrixForm}
    P_{b,M} = [b(\alpha)]_{b \in B, \alpha \in \Variety(I)}. 
\end{equation}
If other bases than $[1]$ are used in each
$\polynomials{n}(x) \big/ \ideal{x_1 - \alpha_{1}, \dots, x_n -
  \alpha_{n}}$, e.g. $[a_i]$ for $a_i \in \mathbb{C}$, the matrix
changes to
\begin{equation}
    \label{eq:FourierTrafoFiniteVarietyMatrixForm}
    P_{b,M} = \diag\bigg(\frac{1}{a_i}\bigg) [b(\alpha)]_{b \in B, \alpha \in
      \Variety(I)}.  
\end{equation}
A fast algorithm for the Fourier transform $P_{b,M}$ is a
factorization of the dense matrix $P_{b,M}$ into sparse matrices.

As motivation for the following we deduce the Cooley-Tukey FFT
algorithm for the $\mathsf{DFT}$, first with a
top-down~\cite{Pueschel.Moura:2008c} then with a bottom-up
approach~\cite{Sandryhaila.Kovacevic.Pueschel:2011}. This motivates
the general theory. Consider the finite, discrete time signal model.
If $n = m \cdot r$, we have $x^n = (x^m)^r$, and
$(x^m)^r - 1 = \prod_{j=1}^{r} (x^m - \E^{2 \pi \I j/r})$, as well as
$x^m - \E^{2 \pi \I j/r} = \prod_{k=1}^{m} (x - \E^{2 \pi \I j k/(m
  r)})$. Hence we can decompose the module
$\mathbb{C}[x] \big/ \ideal{x^n - 1}$ in steps using the Chinese
remainder theorem
\begin{equation}
    \label{eq:FourierDecompositionTopDown}%
    \begin{split}
        \mathbb{C}[x] \big/ \ideal{x^n - 1}
        &\longrightarrow \mathbb{C}[x] \big/ \ideal{(x^m)^r - 1}
        \\
        &\longrightarrow \bigoplus_{j=1}^{r} \mathbb{C}[x]
        \big/ \ideal{x^m - \E^{2 \pi \I j/r}} \\
        &\longrightarrow \bigoplus_{j=}^{r}
        \bigoplus_{k=1}^{m} \mathbb{C}[x] \big/ \ideal{x - \E^{2
            \pi \I j k / (r m)}} \\
        &\longrightarrow \bigoplus_{\ell=1}^{n} \mathbb{C}[x]
        \big/ \ideal{x - \E^{2 \pi \I \ell/n}}.
    \end{split}
\end{equation}
Each step is described by a sparse matrix and via the recursion step,
we obtain the well-known $O(n \log(n))$ Cooley-Tukey algorithm in the
case $n = 2^m$. This is an example of the top-down approach for the
derivation of FFT-like algorithms by algebraic signal processing
theory used in \cite{Pueschel.Moura:2008c}.

For a bottom-up approach we need to recall some tools from the
representation theory of algebras. Consider a subalgebra $\algebra{B}$
of the algebra $\algebra{A}$. A finite set $T$ is called transversal
if
\begin{equation}
    \label{eq:Transversal}
    \algebra{A} = \bigoplus_{t \in T} t \algebra{B} 
\end{equation}
as vector spaces. If $M$ is a $\algebra{B}$-module, the module
$\bigoplus_{t \in T} t \acts M$, with vector space direct sum and
$\acts$ denoting the action of the algebra on the module, is the
$T$-induced $\algebra{A}$-module of $M$.

Now consider again the discrete finite time signal model with
$n = m \cdot r$. Then the algebra
$\algebra{B} = \mathbb{C}[y] \big/ \ideal{y^m - 1}$ is a subalgebra of
$\algebra{A} = \mathbb{C}[x] \big/ \ideal{x^n - 1}$ generated by
$y = x^r$. A transversal of $\algebra{B}$ in $\algebra{A}$ is given by
$T = \{1,x,\cdots,x^{r-1}\}$. The identification of the regular module
$\mathbb{C}[x] \big/ \ideal{x^n - 1}$ with the induced module
$\bigoplus_{t \in T} t \mathbb{C}[y]\big/ \ideal{y^m - 1}$ is done by
a basis change. This results in the decomposition
\begin{equation}
    \label{eq:FourierDecompositionBottomUp}%
    \begin{split}
        \mathbb{C}[x] \big/ \ideal{x^n - 1}
        &\longrightarrow \bigoplus_{t \in T} t \mathbb{C}[y]
        \big/ \ideal{y^m - 1} \\
        &\longrightarrow \bigoplus_{t \in T} \bigoplus_{k=1}^{m} t
        \mathbb{C}[y] \big/ \ideal{y - \E^{2 \pi \I k /m}} \\ 
        &\longrightarrow \bigoplus_{\ell=1}^{n} \mathbb{C}[x]
        \big/ \ideal{x - \E^{2 \pi \I \ell/n}}.
    \end{split}
\end{equation}
Even though this decomposition resembles at a first glance the
top-down approach, the explicit matrix form shows that these
approaches are somewhat dual to each other. The top-down approach
results in a decimation-in-frequency approach while the bottom-up
yields a decimation-in-time approach. For example with
$n = 4 = 2 \cdot 2$ the decomposition
\eqref{eq:FourierDecompositionTopDown} leads to
\begin{equation}
    \label{eq:FourierMatricesTopDown}%
        \begin{bmatrix}
            1 & 1 & 1 & 1 \\
            1 & -\I & -1 & \I \\
            1 & -1 & 1 & -1 \\
            1 & \I & -1 & -\I
        \end{bmatrix} \\
        =
        \begin{bmatrix}
            1 & 0 & 0 & 0\\
            0 & 0 & 0 & 1 \\
            0 & 1 & 0 & 0\\
            0 & 0 & 1 & 0 
        \end{bmatrix}
        \begin{bmatrix}
            1 & 1 \\
            1 & -1 \\
            & & 1 & \I \\
            & & 1 & -\I
        \end{bmatrix}        
        \begin{bmatrix}
            1 & 0 & 1 & 0 \\
            0 & 1 & 0 & 1 \\
            1 & 0 & -1 & 0 \\
            0 & 1 & 0 & -1
        \end{bmatrix},
\end{equation}
while the decomposition \eqref{eq:FourierDecompositionBottomUp} gives
\begin{equation}
    \label{eq:FourierMatricesBottomUp}%
    \begin{bmatrix}
        1 & 1 & 1 & 1 \\
        1 & -\I & -1 & \I \\
        1 & -1 & 1 & -1 \\
        1 & \I & -1 & -\I
    \end{bmatrix} \\
    =
    \begin{bmatrix}
        1 & 0 & 1 & 0 \\
        0 & 1 & 0 & -\I \\
        1 & 0 & -1 & 0 \\
        0 & 1 & 0 & \I 
    \end{bmatrix}
    \begin{bmatrix}
        1 & 1 \\
        1 & -1 \\
        & & 1 & 1 \\
        & & 1 & -1
    \end{bmatrix}
    \begin{bmatrix}
        1 & 0 & 0 & 0\\
        0 & 0 & 1 & 0 \\
        0 & 1 & 0 & 0 \\
        0 & 0 & 0 & 1 
    \end{bmatrix}.
\end{equation}
Hence both approaches lead to a sparse factorization of the matrices.
Indeed in this case, as well as in general for univariate polynomials,
both approaches can be described using induced modules. This relies on
each of the modules
$\mathbb{C}[x] \big/ \ideal{x^m - \E^{2 \pi \I j/r}}$ being isomorphic.
If one has more than one variable this is not true anymore.
Furthermore, it might be advantageous to include more general modules
and module actions than the regular module.

The basic idea for fast algorithms is stated in the following
commutative diagram~\eqref{eq:BigPictureCooleyTukey}. It shows how one
can decompose the $\algebra{A}$-module $N$ stepwise if one can
represent it a as an induction.
\begin{equation}
    \label{eq:BigPictureCooleyTukey}
    \begin{tikzpicture}[baseline=(current bounding box).center]
        \matrix(m)[matrix of math nodes,
        row sep = 2.5em,
        column sep = 9em]{
          N  &  \bigoplus_{t} t \acts M_t \\          
          \bigoplus_i \polynomials{n} \big/
          \ideal{ x - \alpha_i}
          &  \bigoplus_{t} \bigoplus_j
          \polynomials{n}(y) \big/ \ideal{ y - \beta_{t,j}} \\  
        };
        \draw[->] (m-1-1) to node[midway, above, sloped]{Basis change} (m-1-2);
        \draw[->] (m-1-1) to node[right]{Fourier} (m-2-1);
        \draw[->] (m-1-2) to node[right]{Fourier} (m-2-2);
        \draw[->] (m-2-2) to node[midway, below, sloped]{Project and
          Scale} (m-2-1); 
    \end{tikzpicture}
\end{equation}
The diagram is cast into a theorem.
\begin{theorem}[FFT algorithms, bottom-up approach]
    \label{theorem:CooleyTukeyTypeAlgorithms}%
    Let $\algebra{A}$ be an algebra with subalgebra $B$. Let
    $M_t = \polynomials{d}(y) \big/ J_t$ be a set of
    $\algebra{B}$-modules such that
    $N = \bigoplus_{t \in T} t \acts M_t = \polynomials{n} \big/ I$,
    with $T = \{t_1,\dots,t_w\} \subset \algebra{A}$ a finite set, is
    an $\algebra{A}$-module. Assume the action of $t \in T$ on $M_t$
    is by multiplication with a polynomial $t$. Let
    $r_t \colon V(I) \longrightarrow V(J_t)$ be a surjective map
    between the corresponding varieties. The Fourier transform of $N$
    with respect to a basis $b_N$ can be decomposed as
    \begin{equation}
        \label{eq:CooleyTukeyTypeAlgorithms}
        P_{b_N, N} = [D_1 R_1 | \dots | D_w R_w] \left[ \bigoplus_{t
          \in T} P_{b_t, t \acts M} \right] B_{\oplus b_t}^{b_N},
    \end{equation}
    where $B_{\bigoplus b_t}^{b_N}$ is the basis change from the basis
    $b_N$ to the concatenation of bases of the $t \acts M_t$, the
    matrices $P_{b_t, t \acts M_t}$ are the Fourier transforms of the
    $t \acts M_t$, the matrices $R_t$ are matrices with entries
    $[R_t]_{\alpha \in \Variety(I), \beta \in \Variety(J)}$ being $1$
    if $r_t(\alpha) = \beta$ and $0$ otherwise, and the 
    $D_t = \diag\left( t(\alpha) | \alpha \in \Variety(I)\right)$ .
\end{theorem}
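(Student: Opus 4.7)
The plan is to verify that the diagram \eqref{eq:BigPictureCooleyTukey} commutes by tracing an arbitrary $p \in N$, written in the basis $b_N$, through the operator on the right-hand side of \eqref{eq:CooleyTukeyTypeAlgorithms}, and comparing the output to $[p(\alpha)]_{\alpha \in \Variety(I)}$, which is by definition the action of $P_{b_N, N}$ on $p$. Every step except one is a direct computation from the definitions; the single content-bearing identity is the compatibility of evaluation across the two polynomial rings.

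First I would apply $B_{\oplus b_t}^{b_N}$. Since $N = \bigoplus_{t \in T} t \acts M_t$ as a vector space, any $p \in N$ has a unique decomposition $p = \sum_{t \in T} t \cdot m_t$ with $m_t \in M_t$, and the basis change rewrites the $b_N$-coordinates of $p$ as the concatenation over $t \in T$ of the $b_t$-coordinates of the $m_t$. Next, applying the block-diagonal $\bigoplus_{t \in T} P_{b_t, t \acts M_t}$ replaces each block by its evaluation vector $[m_t(\beta)]_{\beta \in \Variety(J_t)}$, by the definition of the Fourier transform on $M_t = \polynomials{d}(y) \big/ J_t$. Finally, the concatenated block-row $[D_1 R_1 \mid \cdots \mid D_w R_w]$ acts on this stacked vector as follows: $R_t$ selects, for each $\alpha \in \Variety(I)$, the entry indexed by $r_t(\alpha) \in \Variety(J_t)$, and $D_t$ rescales it by $t(\alpha)$; summing over $t$ via the concatenation yields the vector whose $\alpha$-entry is $\sum_{t \in T} t(\alpha) \, m_t(r_t(\alpha))$.

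The main obstacle, and the only step that is not purely formal, is to show
\begin{equation*}
  p(\alpha) \;=\; \sum_{t \in T} t(\alpha) \, m_t(r_t(\alpha))
  \qquad \text{for all } \alpha \in \Variety(I).
\end{equation*}
Here one must reconcile two evaluations: $p$ is a polynomial in $x = (x_1, \dots, x_n)$, whereas each $m_t$ lives in $\polynomials{d}(y) \big/ J_t$ with $y$-variables parametrising generators of the subalgebra $\algebra{B} \subset \algebra{A}$. Regarding those generators as polynomials $y = y(x)$ in $x$, the hypothesis on the surjection $r_t \colon \Variety(I) \to \Variety(J_t)$ is precisely that $r_t(\alpha) = (y_1(\alpha), \dots, y_d(\alpha))$ and that this actually lands in $\Variety(J_t)$ (so that $m_t \in \polynomials{d}(y) \big/ J_t$ is unambiguously evaluated). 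With this identification, $t \cdot m_t \in N$ evaluated at $\alpha$ equals $t(\alpha) \, m_t(r_t(\alpha))$; summing the decomposition $p = \sum_t t \cdot m_t$ then gives the displayed identity.

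Combining the three chased steps yields exactly the $\alpha$-entry of $P_{b_N, N} \, p$, and since the argument is linear in the coordinates of $p$ it agrees on every basis vector of $N$. Hence the matrix identity \eqref{eq:CooleyTukeyTypeAlgorithms} holds.
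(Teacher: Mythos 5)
Your proof is correct and takes essentially the same approach as the paper's: the paper factors the evaluation matrix $P_{b_N,N}$ directly via the identity $b(\alpha) = t(\alpha)\,b'(r_t(\alpha))$ together with the observation that $R_t$ pulls back evaluation on $\Variety(J_t)$ along $r_t$, which is precisely your vector-chase written at the matrix level. The only difference is that you make explicit the one substantive point the paper leaves implicit, namely that $r_t$ must be the map induced by the subalgebra generators $y(x)$ so that evaluating $m_t$ at $r_t(\alpha)$ agrees with evaluating $m_t(y(x))$ at $\alpha$.
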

\begin{proof}
    First note that since $t$ acts as multiplication with a polynomial
    any element of $t \acts M_t$ can be written as $t \cdot b$ and
    denote the choosen basis without the $t$ as $b_t$. Then the claim
    follows from the following unwinding of definitions
    \begin{equation*}
        \begin{split}
            P_{b_N,N} &= [b(\alpha)]_{b \in b_N, \alpha \in
              \Variety(I)} \\
            &= [t(\alpha) b(r(\alpha))_{b \in \oplus b_t, \alpha \in
              \Variety(I)} | t \in T] B_{\oplus b_t}^{b_N} \\
            &= [ \diag(t(\alpha) | \alpha \in \Variety(I)) R_t
            (b(\beta))_{b \in b_t, \beta \in \Variety(J_t)}] B_{\oplus
              b_t}^{b_N}  \\
            &= [D_1 R_1 | \dots | D_w R_w] \left[ \bigoplus_{t \in T}
                P_{b_t, t \acts M_t} \right] B_{\oplus b_t}^{b_N},
        \end{split}
    \end{equation*}
    where
    $(b(r(\alpha))_{b \in b_t, \alpha \in \Variety(I)} = R_t
    (b(\beta))_{b \in b_t, \beta \in \Variety(J_t)}$ follows since
    $r \colon \Variety(I) \longrightarrow \Variety(J_t)$ is onto and
    $R_t$ keeps track of this map. The result follows. \qed
\end{proof}
\begin{example}
    \label{example:FFTrevisited}%
    Checking consistency the matrix decomposition
    \eqref{eq:FourierMatricesBottomUp} is derived using
    Theorem~\ref{theorem:CooleyTukeyTypeAlgorithms}.
    The module $\mathbb{C}[x] \big/ \ideal{x^4 - 1}$ can be
    represented as $1 \acts \mathbb{C}[y] \big/ \ideal{y^2 - 1} \oplus
    x \acts \mathbb{C}[y] \big/ \ideal{y^2 - 1}$ with transversal $T =
    \{1,x\}$. 
    The basis change
    $B$ is from $\{1,x,x^2,x^3\}$ to $\{1,x^2, x,x^3\}$ and thus
    \begin{equation*}
        B =
        \begin{bmatrix}
            1 & 0 & 0 & 0\\
            0 & 0 & 1 & 0 \\
            0 & 1 & 0 & 0\\
            0 & 0 & 0 & 1
        \end{bmatrix}.        
    \end{equation*}
    The direct sum is
    $\mathbb{C}[y] \big/ \ideal{y^2 - 1} \oplus x \mathbb{C}[y] \big/
    \ideal{y^2 - 1}$ as modules leading to the matrix
    \begin{equation*}
        \DFT_2 \oplus
        \DFT_2 =
        \begin{bmatrix}            
            1 & 1  & & \\
            1 & -1 & & \\
            & & 1 & 1 \\
            & & 1 & -1 
        \end{bmatrix}.        
    \end{equation*}
    The matrices $R_{1}$ and $R_2$ keeping track of the map between
    the varieties are given by
    \begin{equation*}
        M_i =
        \begin{bmatrix}
            1 & 0 \\
            0 & 1 \\
            1 & 0 \\
            0 & 1
        \end{bmatrix},        
    \end{equation*} since $r = x^2$ maps
    $\Variety(\ideal{x^4 - 1}) = \{1,\I,-1,-\I\}$ onto
    $\{1,-1,1,-1\}$. The diagonal matrix $D_1$ is the identity since
    the polynomial $1$ evaluates always to $1$, while
    \begin{equation*}
        D_2 =
        \begin{bmatrix}
            1 \\
            & \I \\
            & & -1 \\
            & & & -\I
        \end{bmatrix}.        
    \end{equation*}
    Hence we obtain
    \begin{equation*}
        [D_1 M_1 \; | \; D_2 M_2 ] =
        \left[ \begin{array}{c c | c c}
            1 & 0 & 1 & 0 \\
            0 & 1 & 0 & \I \\
            1 & 0 & -1 & 0 \\
            0 & 1 & 0 & -\I
        \end{array}\right] .
    \end{equation*}
    The matrix decomposition obtained using
    theorem~\ref{theorem:CooleyTukeyTypeAlgorithms} hence coincides
    with the decomposition~\eqref{eq:FourierMatricesBottomUp}. 
\end{example}
We want to investigate, how we can ensure existence of a transversal.
We start by characterizing subalgebras generated by exactly the number
of variables generators. This is done in terms of the image of the
variety under the image of the generators of the subalgebra.
\begin{proposition}
    \label{proposition:CharacterizationOfPolynomialSubalgebras}%
    Let $\algebra{B} \subseteq \algebra{A} = \polynomials{n} \big/ I$
    be a finitely generated subalgebra, s.t.
    $\algebra{B} = \ideal{r_1, \dots, r_n}$ for $r_i \in \algebra{A}$.
    Then as algebras
    \begin{equation}
        \label{eq:CharacterizationOfPolynomialSubalgebras}
        \algebra{B} \cong  \polynomials{n}(y) \big/ J,
    \end{equation}
    where $J = \Ideal((r_1,\dots,r_n)(\Variety(I)))$ is the ideal of
    the image of $\Variety(I)$ under the generators of $\algebra{B}$
    in $\mathbb{C}[y_1,\dots,y_n]$.
\end{proposition}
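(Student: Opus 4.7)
The plan is to construct the isomorphism explicitly as an evaluation-style homomorphism and then identify its kernel with $J$ using the standing assumption that $I$ is radical and zero-dimensional.

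First, I would define the algebra homomorphism
\[
\phi \colon \polynomials{n}(y) \longrightarrow \algebra{B} \subseteq \algebra{A},
\qquad y_i \longmapsto r_i.
\]
Surjectivity of $\phi$ is immediate from the assumption $\algebra{B} = \ideal{r_1,\dots,r_n}$ (read as a subalgebra, not as an ideal), since any element of $\algebra{B}$ is by definition a polynomial expression in the $r_i$. So by the first isomorphism theorem it suffices to show $\ker \phi = J$.

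The key step is the kernel computation, and this is where the radical, zero-dimensional hypothesis does the real work. Because $I$ is radical and zero-dimensional, the Chinese remainder decomposition \eqref{eq:CRTFiniteVariety} gives an injection
\[
\algebra{A} \hookrightarrow \bigoplus_{\alpha \in \Variety(I)} \mathbb{C},
\qquad f \longmapsto (f(\alpha))_{\alpha \in \Variety(I)},
\]
so an element of $\algebra{A}$ vanishes if and only if it vanishes pointwise on $\Variety(I)$. Applied to $\phi(p) = p(r_1,\dots,r_n)$, this means $p \in \ker \phi$ if and only if $p(r_1(\alpha),\dots,r_n(\alpha)) = 0$ for every $\alpha \in \Variety(I)$, i.e.\ if and only if $p$ vanishes on the finite set $(r_1,\dots,r_n)(\Variety(I)) \subset \mathbb{C}^n$. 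By definition this is precisely $J = \Ideal((r_1,\dots,r_n)(\Variety(I)))$, so $\ker \phi = J$ and $\algebra{B} \cong \polynomials{n}(y)/J$.

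The only subtle point is the direction $\ker\phi \supseteq J$: one needs that the evaluations $(r_1(\alpha),\dots,r_n(\alpha))$ are well defined as elements of $\mathbb{C}^n$, which holds because each $r_i \in \algebra{A}$ is represented by a polynomial whose value at $\alpha \in \Variety(I)$ is independent of the chosen representative (two representatives differ by an element of $I$, which vanishes on $\Variety(I)$). Once that is noted, the argument is a direct application of the Nullstellensatz for finite varieties and nothing deeper is required; the main obstacle is essentially bookkeeping rather than a genuine difficulty, since all the structural assumptions of the paper (radical, zero-dimensional ideals) have been tailored to make exactly this pointwise description of $\algebra{A}$ available.
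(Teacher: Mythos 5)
Your proof is correct, but it takes a genuinely different (and leaner) route than the paper's. You build the evaluation homomorphism $\phi \colon \polynomials{n}(y) \to \algebra{B}$, $y_i \mapsto r_i$, starting from the \emph{free} polynomial algebra, so surjectivity is automatic and well-definedness is a non-issue; you then identify $\ker\phi$ with $J$ in one stroke by using that the radical, zero-dimensional hypothesis makes $\algebra{A}$ embed into $\bigoplus_{\alpha \in \Variety(I)} \mathbb{C}$ via point evaluation, and you conclude by the first isomorphism theorem. The paper instead argues in two stages: first a dimension count $\dim\algebra{B} = |(r_1,\dots,r_n)(\Variety(I))|$ via a commutative diagram built on the Chinese remainder (Fourier) isomorphism, and then a map $\kappa \colon \algebra{B} \to \polynomials{n}(y)/J$ in the \emph{opposite} direction, $r_i \mapsto y_i$, whose kernel is shown to be trivial. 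Your approach avoids two mild awkward points of the paper's argument — the well-definedness of $\kappa$ (an element of $\algebra{B}$ may admit several representations as a polynomial in the $r_i$, and $\kappa$ is only canonically defined as the inverse of the map you construct) and the separate dimension bookkeeping — at the cost of not displaying explicitly the formula $\dim\algebra{B} = \ell$, which the paper reuses informally right after the proposition when discussing existence of transversals; that formula is of course an immediate corollary of your isomorphism together with $|\Variety(J)| = \ell$ and radicality of $J$.
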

\begin{proof}
    To proof \eqref{eq:CharacterizationOfPolynomialSubalgebras}, we
    show that both sides have the same dimension and the kernel of an
    algebra homomorphism between them is trivial.

    Denote the finite variety by $\{\alpha_1,\dots,\alpha_k\} = \Variety(I)$.
    Let $\{\beta_1,\dots,\beta_\ell\}$ the image of these points under
    $(r_1,\dots,r_n)$. Then $\ell \leq k$.
    \begin{claim}[1]
        \label{claim:DimensionOfSubalgebra}%
        $\dim \algebra{B} = \ell$.
    \end{claim}
    We prove Claim~(1). We can write
    \begin{equation*}
        I = \prod_i \ideal{x_1 - \alpha_{i,1}, \dots, x_n - \alpha_{i,n}}.
    \end{equation*}
    Each of the
    $\ideal{x_1 - \alpha_{i,1}, \dots, x_n - \alpha_{i,n}}$ is
    maximal, hence they are all coprime and we can use the Chinese
    remainder theorem~\eqref{eq:CRTFiniteVariety} to decompose
    $\algebra{A}$. Denote by
    \begin{equation*}
        \Fourier \colon \algebra{A} \longrightarrow \bigoplus_i
        \polynomials{n} \big/ \ideal{x_1 - \alpha_{i,1}, \dots,
          x_n - \alpha_{i,n}}   
    \end{equation*}
    the isomorphism from
    equation~\eqref{eq:FourierTrafoFiniteVarietyMatrixForm}. The
    diagram
    \begin{equation*}
        \begin{tikzpicture}[baseline=(current bounding box).center]
            \matrix(m)[matrix of math nodes,
            row sep = 2em,
            column sep = 4em
            ]{
              \ker(\pr_{\algebra{B}}) & \ker(\pr) \\
              \algebra{A} & \bigoplus_i \polynomials{n}
              \big/ \ideal{x_1 - \alpha_{i,1}, \dots, x_n - \alpha_{i,n}} \\
              \algebra{B} & \Fourier(\algebra{B}) \\
              0 & 0 \\
            };
            \draw[->] (m-1-1) to (m-2-1);
            \draw[->] (m-2-1) to node[left]{$\pr_{\algebra{B}}$} (m-3-1);
            \draw[->] (m-3-1) to (m-4-1);
            
            \draw[->] (m-1-2) to (m-2-2);
            \draw[->] (m-2-2) to node[right]{$\pr$} (m-3-2);
            \draw[->] (m-3-2) to (m-4-2);

            \draw[->] (m-2-1) to node[above]{$\Fourier$} (m-2-2);
            \draw[->] (m-3-1) to node[above]{$\Fourier$} (m-3-2); 
        \end{tikzpicture}            
    \end{equation*}
    commutes. Hence it suffices to determine the dimension of
    $\ker(\pr)$, to determine the dimension of $\algebra{B}$. But the
    dimension of $\ker(\pr)$ is given by the number of $\alpha_i$,
    which get mapped to the same $\beta_j$ under the $r_i$, so
    $\dim \ker(\pr) = k - \ell$. Henceforth
    $\dim \algebra{B} = \dim \Fourier(\algebra{B}) = k - \dim
    \ker(\pr) = \ell$. Hence
    $\dim \algebra{B} = \dim \polynomials{n}(y) \big/ J$, as $J$ is
    radical and $|\Variety(J)| = \ell$. This proves claim~(1).

    Consider the algebra homomorphism
    \begin{equation*}
        \begin{split}
            \kappa \colon & \algebra{B} \longrightarrow
            \polynomials{n}(y) \big/ J \\
            & r_i \mapsto y_i,
        \end{split}
    \end{equation*}
    which maps generators to generators. We have the short exact
    sequence
    \begin{equation*}
        \begin{tikzpicture}[baseline=(current bounding box).center]
            \matrix(m)[matrix of math nodes,
            row sep = 2em,
            column sep = 4em]{
              0 & \kappa^{-1}(J) & \algebra{B} &
              \polynomials{n}(y) \big/ J & 0 \\
            };
            \draw[->] (m-1-1) to (m-1-2);
            \draw[->] (m-1-2) to (m-1-3);
            \draw[->] (m-1-3) to node[above]{$\kappa$} (m-1-4);
            \draw[->] (m-1-4) to (m-1-5);
        \end{tikzpicture}
    \end{equation*}
    and hence
    $ \polynomials{n}(y) \big/ J \cong \algebra{B} \big/
    \kappa^{-1}(J)$. So we still need to show:
    \begin{claim}[2]
        \label{claim:KernelOfAlgebraHomomorphism}%
        $\ker(\kappa) = \{0\}$.
    \end{claim}
    We prove Claim~(2). It suffices to show, that the $r_i$ vanish on
    the ideal $\ker(\kappa) = \kappa^{-1}(J)$. As $J$ is the ideal of
    the points $\beta_i$, it can be written as
    \begin{equation*}
        J = \prod_j \ideal{y_1 - \beta_{j,1}, \dots, y_n - \beta_{j,n}}.
    \end{equation*}
    So
    $\kappa^{-1}(J) = \prod_i \ideal{r_1 - \beta_{i,1}, \dots, r_n -
      \beta_{i,n}}$. Now the isomorphism $\Fourier$ maps the $r_i$ to
    the $\beta_i$, as the $\beta_i$ are the image of the $\alpha_i$
    under $r_i$ and $\Fourier$ is, by
    \eqref{eq:FourierTrafoFiniteVarietyMatrixForm}, just inserting
    $\alpha_i$ into the polynomials. So
    $\Fourier(\kappa^{-1}(J)) = \{0\}$, hence $\kappa^{-1}(J) = \{0\}$
    in $\algebra{A}$ and evidently in $\algebra{B}$ aswell, as
    $\algebra{B}$ is a subalgebra of $\algebra{A}$. Hence the
    Claim~(2) is proved.

    By Claim~(1) and Claim~(2) we have proved the proposition. \qed
\end{proof}
Hence in this case there always exists a transversal of $\algebra{B}$
in $\algebra{A}$, as one can choose each $t \in T$ such that
$t(a_i) = 0$ and $t(a_\ell) \not= 0$ for one $a_\ell \in \Variety(I)$.
Then each $t \algebra{B}$ has dimension $1$, and hence
$\dim \bigoplus_{t \in T} t \algebra{B} = \dim \bigoplus_{a \in
  \Variety(I)} \polynomials{n}(x) \big/ \ideal{x_1 - a_1,\dots, x_n -
  a_n}$. Thus they are isomorphic as vector spaces and by
\eqref{eq:CRTFiniteVariety} to $\algebra{A}$ aswell. Note that this
choice is a useless one for the development of fast algorithms, as we
have no intermediate steps and hence one does not obtain a recursive
structure which can be exploited for speeding up calculations.
Nonetheless this is a necessary remark, as now we can always assume a
transversal existent.

Choose a transversal $T$ of the subalgebra $\algebra{B}$ in
$\algebra{A}$. The next step is to show that the structure
$\algebra{B}$-modules $t \acts M$ for $\algebra{B}$-modules of the
form $M =  \polynomials{n}(y) \big/ J$ with zero-dimensional,
radical ideal $J$ and $t \in T$ is again a polynomial module. Hence
one gets a descending chain of submodules where one can easily the
describe the corresponding Fourier transforms.
\begin{proposition}
    \label{proposition:StrucutreOfTransversedModules}%
    Let $\algebra{A}$ be an algebra with subalgebra $\algebra{B}$ and
    let $T$ be a finite transversal of $\algebra{B}$ in $\algebra{A}$.
    Let $M =  \polynomials{n}(y) \big/ J$ be a
    $\algebra{B}$-module and
    $\bigoplus t \acts M = \polynomials{n} \big/ I$ the
    induced $\algebra{A}$-module.
    There exists a map $r \colon \Variety(I) \longrightarrow
    \Variety(J)$. 
    The action of the transversal elements leads to
    $\algebra{B}$-modules of the form
    \begin{equation}
        \label{eq:StructuredOfTransversedModules}
        t \acts M \cong  \mathbb{C}[y_1,\dots,y_n] \big/ J_{t},           
    \end{equation}
    where
    $J_{t} = \Ideal( \{ r(\alpha) \; | \; \alpha \in \Variety(I)
    \text{ and } t_p(\alpha) \not= 0 \}$.
\end{proposition}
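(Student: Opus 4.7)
The plan is to realize $N = \polynomials{n}/I$ concretely via the Fourier isomorphism $\Fourier \colon N \to \bigoplus_{\alpha \in \Variety(I)} \polynomials{n}/\ideal{x_1 - \alpha_1, \dots, x_n - \alpha_n}$ from~\eqref{eq:FourierTrafoFiniteVarietyMatrixForm}, so that every element of $N$ is identified with its tuple of values on $\Variety(I)$, and then to track the multiplication-by-$t$ map in this picture.

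First I would construct $r$. Since $\algebra{B} = \polynomials{n}(y)/J$ is a finitely generated subalgebra of $\algebra{A}$, Proposition~\ref{proposition:CharacterizationOfPolynomialSubalgebras} produces polynomials $r_1,\dots,r_n \in \algebra{A}$ generating $\algebra{B}$ with $J = \Ideal((r_1,\dots,r_n)(\Variety(I)))$. Setting $r(\alpha) := (r_1(\alpha),\dots,r_n(\alpha))$ then yields a well-defined map $r \colon \Variety(I) \to \Variety(J)$, proving the first claim.

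Next I would show $t \acts M$ is a $\algebra{B}$-submodule of $N$ (immediate from commutativity: $b(tm) = t(bm)$) and consider the surjective $\algebra{B}$-linear map $\varphi \colon M \to t \acts M$, $m \mapsto t \cdot m$. The key computation is on the Fourier side: for $\alpha \in \Variety(I)$, $\Fourier(t \cdot m)_\alpha = t(\alpha)\, m(r(\alpha))$, where $m \in \polynomials{n}(y)/J$ is evaluated at $r(\alpha) \in \Variety(J)$ (this evaluation is well-defined precisely because $r(\alpha) \in \Variety(J)$). Consequently $tm = 0$ in $N$ iff $m(r(\alpha)) = 0$ for every $\alpha \in \Variety(I)$ with $t(\alpha) \neq 0$, i.e.\ iff the class of $m$ lies in $J_t / J$, where $J_t = \Ideal(\{r(\alpha) \mid \alpha \in \Variety(I),\ t(\alpha) \neq 0\})$ (note $J \subseteq J_t$ since the defining set is contained in $\Variety(J)$). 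The first isomorphism theorem then gives $t \acts M \cong M/(J_t/J) \cong \polynomials{n}(y)/J_t$.

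The main subtle point will be verifying that $\varphi$ is genuinely $\algebra{B}$-linear with respect to the correct module structure on $t \acts M$; this boils down to the observation that the $\algebra{B}$-action on $M$ corresponds, after $\Fourier$, to pointwise multiplication by the values of $b \in \algebra{B}$ at the points $r(\alpha)$, and this is exactly how the ambient action on $N$ restricts to the subspace $t \acts M$ once the common factor $t(\alpha)$ is peeled off. Apart from this, everything reduces to the Nullstellensatz fact that the ideal of a finite set of points in $\mathbb{C}^n$ is radical and zero-dimensional, so $\polynomials{n}(y)/J_t$ is well-defined and its vector-space dimension equals $|\{r(\alpha) : t(\alpha) \neq 0\}|$, matching the dimension of $t \acts M$ computed directly from the Fourier description and providing a consistency check for the argument.
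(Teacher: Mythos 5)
Your proposal is correct and rests on the same pivot as the paper's proof, namely the Chinese-remainder/Fourier description $t\cdot m \mapsto \bigl(t(\alpha)\,m(r(\alpha))\bigr)_{\alpha \in \Variety(I)}$; where the paper then merely counts dimensions ($|\Variety(J)|$ minus one for each fibre $[\alpha]$ on which $t$ vanishes) and invokes freeness, you identify the kernel of $m \mapsto t\cdot m$ explicitly as $J_t/J$ and apply the first isomorphism theorem, which is a sharper version of the same argument. The one point where you genuinely diverge is the construction of $r$: you obtain it from the generators $r_1,\dots,r_n$ via Proposition~\ref{proposition:CharacterizationOfPolynomialSubalgebras}, which tacitly identifies the module $M = \polynomials{n}(y)\big/J$ with the algebra $\algebra{B}$ (the situation of Remark~\ref{remark:MapBetweenVarieties}), whereas the paper argues more generally that $1 \in T$ forces $M$ to be a submodule of $\bigoplus_t t \acts M$ and takes $r$ to be the resulting projection of $\Variety(I)$ onto $\Variety(J)$.
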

\begin{proof}
    The existence of the map $r$ is clear, since $\algebra{B}$ is a
    subalgebra of $\algebra{A}$. Hence $T$ must contain $1$ and thus
    $M$ is a submodule of $\bigoplus t \acts M$. Therefore $r$ can be
    choosen as a projection of $\Variety(I)$ onto its subset
    $\Variety(J)$.

    It suffices to show that the
    $\algebra{B}$-modules on both sides of
    \ref{eq:StructuredOfTransversedModules} are of equal dimension.
    Then they are isomorphic as commutative algebras have the
    invariant basis property and the all appearing modules are free.

    The isomorphism from the Chinese remainder theorem for $\bigoplus
    t \acts M$ leads for the subset $t \acts M$ to 
    \begin{equation*}
        t_p p
        \mapsto (t_p(\alpha) p(r(\alpha)) )_{\alpha \in \Variety(I)},
    \end{equation*}
    for any $p \in M$. Denote by $[\alpha]$ the equivalence class of
    $\alpha \in \Variety(I)$ which map to the same
    $\beta \in \Variety(J)$. The dimension of $t \acts M$ is
    $|\Variety(J)|$ minus one for each $[\alpha]$ where
    $t_p(\alpha) = 0$. Restricting to $J_t$ hence does not change the
    dimension. The proposition is proven. \qed
\end{proof}
\begin{remark}
    \label{remark:MapBetweenVarieties}%
    Note that the map
    $r \colon \Variety(I) \longrightarrow \Variety(J)$ from
    Prop.~\ref{proposition:StrucutreOfTransversedModules} can
    explicitly determined if $M$ is a \emph{subalgebra} of $N$. Then
    the map is just the set of generators $r = (r_1, \dots,r_n)$ from
    Prop.~\ref{proposition:CharacterizationOfPolynomialSubalgebras}.
\end{remark}
We can not give a general statement about the computational cost of
these algorithms, as in general we do have only the trivial $O(n^2)$
estimate for the computational cost of the matrices $B$ and $M_i$. But
if we assume them to be of linear cost and if we can find a suitable
descending chain of submodules these algorithms are of cost
$O(k \log(k))$, where $k = |\Variety(I)|$. Then the following
proposition is a simple consequence of the
Akra-Bazzi-Theorem~\cite{Akra.Bazzi:1998a}, a refined version of the
Master Theorem for divide and conquer
recurrences~\cite{Bentley.Haken.Saxe:1980a}.
\begin{proposition}
    \label{proposition:ComputationalCostCooleyTukey}%
    Consider the decomposition of the Fourier transform from
    Theorem~\ref{theorem:CooleyTukeyTypeAlgorithms} and assume one has
    a desceding chain of submodules, where in each step we have a
    split in at least two submodules. If the basis change matrices $B$
    and the $M_i$ in each step are $O(k)$ then the decomposition is
    $O(k \cdot \log(k))$.
\end{proposition}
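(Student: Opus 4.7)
My plan is to extract a divide-and-conquer recurrence for the arithmetic cost $T(k)$ of applying $P_{b_N, N}$, with $k = |\Variety(I)|$, directly from the factorization in Theorem~\ref{theorem:CooleyTukeyTypeAlgorithms}, and then to invoke the Akra--Bazzi theorem~\cite{Akra.Bazzi:1998a}. The factorization expresses $P_{b_N, N}$ as a product of three sparse factors: the basis change $B_{\oplus b_t}^{b_N}$, the block-diagonal matrix $\bigoplus_{t \in T} P_{b_t, t \acts M_t}$, and the concatenation $[D_1 R_1 \mid \dots \mid D_w R_w]$. Applied to an input vector in this order, these contribute respectively a non-recursive cost, the sum of the recursive subproblem costs, and another non-recursive cost. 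Writing $k_t = |\Variety(J_t)|$ this yields the recurrence
\begin{equation*}
  T(k) = \sum_{t \in T} T(k_t) + C(k), \qquad \sum_{t \in T} k_t = k, \quad w = |T| \geq 2.
\end{equation*}

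Next I would bound $C(k) = O(k)$. The basis change factor is $O(k)$ by hypothesis. For the concatenated matrix, each $R_t$ is a $0/1$ selection matrix with exactly one nonzero entry per row, as it encodes the surjection $r_t \colon \Variety(I) \to \Variety(J_t)$ from Prop.~\ref{proposition:StrucutreOfTransversedModules}; each $D_t$ is diagonal. Consequently the total number of scalar multiplications required to apply $[D_1 R_1 \mid \dots \mid D_w R_w]$ to a vector is $O(wk)$. Since $w$ is the local split factor, assumed bounded per step, this contribution collapses to $O(k)$, and the hypothesis on $M_i$ absorbs any additional per-step matrix costs, so $C(k) = O(k)$.

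Finally I would apply Akra--Bazzi with $a_t = 1$ and $b_t = k_t/k \in (0,1)$. The critical exponent $p$ solves $\sum_t a_t b_t^p = 1$, and the choice $p=1$ works universally because $\sum_t k_t/k = 1$. With the driving term $C(k) = O(k)$ the theorem then yields
\begin{equation*}
  T(k) = \Theta\!\left(k\left(1 + \int_1^k \frac{O(u)}{u^{2}}\,\D u\right)\right) = \Theta(k \log k).
\end{equation*}

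The point I expect to need the most care is that Akra--Bazzi is typically stated for split ratios $b_t$ that are constant along the recursion, whereas in general the ratios $k_t/k$ could drift between levels of the descending chain. The hypothesis of a ``descending chain of submodules, where in each step we have a split into at least two submodules'' should therefore be read as providing regular, balanced splits at each level, so that the recursion tree has depth $O(\log k)$. Given that, the bound can be obtained either via the variable-ratio extension of Akra--Bazzi or, more elementarily, by summing the per-level cost $O(k)$ over the $O(\log k)$ levels of the tree; both routes produce the claimed $O(k \log k)$ bound.
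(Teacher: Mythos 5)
Your proposal is correct and follows essentially the same route as the paper, which states the proposition without a detailed proof and simply notes that it is a consequence of the Akra--Bazzi theorem applied to the divide-and-conquer recurrence arising from Theorem~\ref{theorem:CooleyTukeyTypeAlgorithms}. Your explicit recurrence, the $O(k)$ bound on the non-recursive cost via the sparsity of the $D_t R_t$ factors, and your caveat about balanced splits (without which the stated hypothesis of ``at least two submodules'' would not by itself force $O(\log k)$ depth) are all faithful elaborations of what the paper leaves implicit.
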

Finding a descending chain of submodules is no problem as one can
collect random points of the variety but this typically leads to
neither sparse $B$ nor sparse $M$. Hence the main difficulty for an
effective applications of the theorem is finding good examples.

For a fast recursive algorithm one needs a chain of descending
submodules. The decomposition property $p(x) = q(r(x))$ is very useful for the
development of fast algorithms as from the following proposition one
obtains a nice chain of subalgebras. The several variables analog of
the decomposition property reads
\begin{equation}
    \label{eq:DecompositionProperty}
    (p_1,\dots,p_n) = (q_1(r_1,\dots,r_n),\dots,q_n(r_1,\dots,r_n)).
\end{equation}
Since this notation is rather opulently, we write
$\ideal{p} = \ideal{p_1,\dots,p_n}$ and
$\ideal{q(r)} = \ideal{q_1(r_1,\dots,r_n),\dots,q_n(r_1,\dots,r_n)}$
if confusion with the one-variable case can be avoided by context. The
decomposition property yields the existence of sufficiently
well-behaved submodules.
\begin{proposition}
    \label{proposition:DecompositionOfIdeals}%
    Assume the zero-dimensional radical ideal
    $I = \ideal{p_1,\dots,p_n}$ satisfies
    \begin{equation*}
        \ideal{p} = \ideal{q(r)}.
    \end{equation*}
    Then $\ideal{r} \cong \polynomials{n}(y) \big/ \ideal{q}$.
\end{proposition}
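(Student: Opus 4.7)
The plan is to apply Proposition~\ref{proposition:CharacterizationOfPolynomialSubalgebras} to the subalgebra $\ideal{r}$ of $\algebra{A} = \polynomials{n}(x)/I$ generated by $r_1,\dots,r_n$. That result immediately yields $\ideal{r} \cong \polynomials{n}(y)/J$ with $J = \Ideal\bigl((r_1,\dots,r_n)(\Variety(I))\bigr)$, so the entire problem reduces to the identification $J = \ideal{q}$.

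I would carry this out on the level of varieties. Writing $r = (r_1,\dots,r_n)\colon \mathbb{C}^n \to \mathbb{C}^n$ for the polynomial map determined by the generators, the decomposition hypothesis $\ideal{p} = \ideal{q(r)}$ translates to
\begin{equation*}
    \Variety(I) = \Variety(\ideal{q(r)}) = \{\alpha \in \mathbb{C}^n \mid q_j(r(\alpha)) = 0 \text{ for all } j\} = r^{-1}(\Variety(\ideal{q})).
\end{equation*}
Applying $r$ to both sides gives $r(\Variety(I)) \subseteq \Variety(\ideal{q})$, so every $q_j$ vanishes on $r(\Variety(I))$ and $\ideal{q} \subseteq J$ is automatic.

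The reverse inclusion $J \subseteq \ideal{q}$ is the main obstacle. Using the paper's blanket assumption that $\ideal{q}$ is radical and zero-dimensional, so that $\ideal{q} = \Ideal(\Variety(\ideal{q}))$, this inclusion is equivalent to the geometric statement $\Variety(\ideal{q}) \subseteq r(\Variety(I))$, i.e.\ surjectivity of the restricted map $r\colon \Variety(I) \to \Variety(\ideal{q})$. By the identification $\Variety(I) = r^{-1}(\Variety(\ideal{q}))$, any preimage of $\beta \in \Variety(\ideal{q})$ already lies in $\Variety(I)$, so the essential point is non-emptiness of each fiber $r^{-1}(\beta)$. I would establish this by viewing $\ideal{r} \hookrightarrow \algebra{A}$ as a finite extension of reduced, finite-dimensional $\mathbb{C}$-algebras and applying the lying-over theorem, in conjunction with the algebra surjection $\polynomials{n}(y)/\ideal{q} \to \ideal{r}$ coming from $y_i \mapsto r_i$ and the inclusion $\ideal{q} \subseteq J$ established above. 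Once $J = \ideal{q}$ is in hand, composing with the isomorphism supplied by Proposition~\ref{proposition:CharacterizationOfPolynomialSubalgebras} concludes the proof.
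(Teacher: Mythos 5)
Your overall route coincides with the paper's: everything is reduced to Proposition~\ref{proposition:CharacterizationOfPolynomialSubalgebras} together with the single geometric identity $r(\Variety(I)) = \Variety(\ideal{q})$. The paper simply asserts this identity ``as $\ideal{p} = \ideal{q(r)}$'', whereas you correctly isolate that only the inclusion $r(\Variety(I)) \subseteq \Variety(\ideal{q})$ is automatic from $\Variety(I) = r^{-1}(\Variety(\ideal{q}))$, and that the reverse inclusion amounts to surjectivity of $r \colon \Variety(I) \to \Variety(\ideal{q})$. That diagnosis is exactly right, and it is the crux.

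The proposed repair, however, does not close the gap. Lying-over applied to the integral extension $\ideal{r} \hookrightarrow \algebra{A}$ only gives surjectivity of $\Variety(I)$ onto the maximal spectrum of $\ideal{r}$, which by Proposition~\ref{proposition:CharacterizationOfPolynomialSubalgebras} is $r(\Variety(I))$ itself --- a tautology. To reach $\Variety(\ideal{q})$ you would need lying-over for $\polynomials{n}(y) \big/ \ideal{q} \to \algebra{A}$, but that map need not be injective; its injectivity is precisely the statement $J = \ideal{q}$ you are trying to prove, so the argument is circular. In fact the surjectivity can genuinely fail: take $r = (x_1, x_1 x_2)$ and $q = (y_1^2 - y_1, (y_2-1)(y_2-2))$. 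Then $I = \ideal{q(r)}$ is radical and zero-dimensional with $\Variety(I) = \{(1,1),(1,2)\}$, so $\dim \ideal{r} = |r(\Variety(I))| = 2$, while $\dim \polynomials{2}(y) \big/ \ideal{q} = 4$; the claimed isomorphism does not exist. So the proposition requires the additional hypothesis that $r$ maps $\Variety(\ideal{p})$ \emph{onto} $\Variety(\ideal{q})$ --- a condition the paper's own one-line proof also elides, and which does hold for the Chebyshev systems of Sect.~\ref{sec:UndirectedLattices} by the stretching-and-folding picture. Under that hypothesis your first paragraph already completes the proof and no lying-over argument is needed.
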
 
\begin{proof}
    The mapping $(r_1,\dots,r_n)$ maps $\Variety(I)$ to the variety of
    the $q_1,\dots,q_n$, i.e.\
    $(r_1,\dots,r_n) (\Variety(I)) = \Variety(\ideal{q})$, as
    $\ideal{p} = \ideal{q(r)}$. By
    Proposition~\ref{proposition:CharacterizationOfPolynomialSubalgebras}
    one has $\ideal{r} \cong \polynomials{n}(y) \big/ \ideal{q}$. Thus
    the proposition is proven. \qed
\end{proof}
In the univariate case one can always obtain a transversal of the
algebra $\ideal{r(x)} \cong \mathbb{C}[y] \big/ \ideal{q(y)}$ from a
basis of $\mathbb{C}[x] \big/ \ideal{r(x)}$. In the multivariate case
this is not always the case. The next proposition formalizes this in
terms of the appearing varieties. Sect.~\ref{sec:UndirectedLattices}
contains examples for both situations.
\begin{proposition}
    \label{proposition:ExistenceOfTransversalByDecomposition}%
    Consider
    $ \ideal{p_1,\dots,p_n} = \ideal{q_1(r_1,\dots,r_n), \dots,
      q_n(r_1,\dots,r_n)}$ with zero-dimensional variety. If
    $|\Variety(\ideal{p})| \not= |\Variety(\ideal{r})| \cdot
    |\Variety(\ideal{q})|$ then no basis of
    $\polynomials{n} \big/ \ideal{r}$ is a transversal of
    $\polynomials{n}(y) \big/ \ideal{q}$ in
    $\polynomials{n} \big/ \ideal{p}$. If
    $|\Variety(\ideal{p})| = |\Variety(\ideal{r})| \cdot
    |\Variety(\ideal{q})|$ then any basis of
    $\polynomials{n} \big/ \ideal{r}$ is a transversal of
    $\polynomials{n}(y) \big/ \ideal{q}$ in
    $\polynomials{n} \big/ \ideal{p}$.
\end{proposition}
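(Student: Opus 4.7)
The plan is a dimension count built on the identity $\dim \polynomials{n}/I = |\Variety(I)|$ for zero-dimensional radical $I$, which is a direct consequence of~\eqref{eq:CRTFiniteVariety}. Accordingly any basis $B$ of $\polynomials{n}/\ideal{r}$ has cardinality $|B| = |\Variety(\ideal{r})|$, and by Prop.~\ref{proposition:CharacterizationOfPolynomialSubalgebras} the subalgebra $\algebra{B} = \polynomials{n}(y)/\ideal{q}$ sitting inside $\algebra{A} = \polynomials{n}/\ideal{p}$ has dimension $|\Variety(\ideal{q})|$.

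For the first direction I would argue by contradiction. If $B$ were a transversal, then $\algebra{A} = \bigoplus_{b\in B} b\algebra{B}$ as vector spaces, yielding
\begin{equation*}
|\Variety(\ideal{p})| = \sum_{b\in B} \dim(b\algebra{B}) \leq |B|\cdot\dim\algebra{B} = |\Variety(\ideal{r})|\cdot|\Variety(\ideal{q})|,
\end{equation*}
which immediately rules out $|\Variety(\ideal{p})| > |\Variety(\ideal{r})|\cdot|\Variety(\ideal{q})|$. For the opposite strict inequality I would pass to the Fourier picture~\eqref{eq:FourierTrafoFiniteVariety}: evaluating a transversal representation $f = \sum_b b\,g_b(r_1,\dots,r_n)$ at $\alpha\in\Variety(\ideal{p})$ with $\beta = r(\alpha)\in\Variety(\ideal{q})$ produces, for each fiber $F_\beta = r^{-1}(\beta)\cap\Variety(\ideal{p})$, the linear system $[b(\alpha)]_{\alpha\in F_\beta,\,b\in B}\,[g_b(\beta)]_{b\in B} = [f(\alpha)]_{\alpha\in F_\beta}$. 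Unique solvability for every $f$ forces the evaluation matrix to be square and invertible, hence $|F_\beta| = |B|$ for every $\beta\in\Variety(\ideal{q})$, and summation over $\beta$ yields $|\Variety(\ideal{p})| = |\Variety(\ideal{r})|\cdot|\Variety(\ideal{q})|$, excluding the remaining case.

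For the converse I would reverse this fiber-wise analysis constructively. The bound $|F_\beta|\leq|\Variety(\ideal{r})|$ (coming from the fact that $|\Variety(\ideal{r})|$ equals the degree of $r\colon\mathbb{C}^n\to\mathbb{C}^n$ as a finite polynomial map, since the fiber over $0$ is reduced by radicality of $\ideal{r}$), the equality hypothesis, and $\sum_\beta |F_\beta| = |\Variety(\ideal{p})|$ together force every fiber to have exactly $|\Variety(\ideal{r})|$ points. The decomposition $\ideal{p} = \ideal{q(r)} = \bigcap_{\beta\in\Variety(\ideal{q})}\ideal{r_1-\beta_1,\dots,r_n-\beta_n}$ combined with the Chinese remainder theorem then yields $\algebra{A}\cong\bigoplus_\beta \polynomials{n}/\ideal{r-\beta}$. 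Taking $B$ in its standard-monomial form relative to a monomial order in which the $r_i$ have pairwise disjoint leading terms (so that $\{r_i\}$, and hence $\{r_i-\beta_i\}$, is a Gröbner basis by the Buchberger criterion), the shift $r_i\mapsto r_i-\beta_i$ preserves the leading-term ideal, so the same $B$ is simultaneously a basis of every $\polynomials{n}/\ideal{r-\beta}$ and assembles into a free $\algebra{B}$-basis of $\algebra{A}$, i.e.\ a transversal. The step I expect to be the main obstacle is precisely this last basis-transfer: for an arbitrary basis rather than a standard monomial one, one must invoke the $\beta$-independence of the change-of-basis matrix between standard monomials and the given basis, which in turn rests on the shift-invariance of the leading-term ideal.
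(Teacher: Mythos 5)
Your route is genuinely different from the paper's. The paper disposes of the first claim with a one-line dimension count and of the second with the bare assertion that the $r_d\cdot q_d$ products $R_i\,Q_j(r_1,\dots,r_n)$ form a basis of $\polynomials{n}\big/\ideal{p}$ once the cardinalities multiply; no linear-independence argument is given there. Your fiber-wise analysis over the points $\beta\in\Variety(\ideal{q})$ is more informative: it isolates the identity $|\Variety(\ideal{p})|=\sum_\beta|F_\beta|$ as the mechanism behind both directions, and your observation that a standard monomial basis of $\polynomials{n}\big/\ideal{r}$ remains a basis of every $\polynomials{n}\big/\ideal{r-\beta}$ (shift-invariance of the leading-term ideal) is exactly the independence argument the paper omits. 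Two caveats on your first part: directness of $\bigoplus_b b\algebra{B}$ gives uniqueness of the components $b\,g_b$, not of the $g_b$ themselves, so ``unique solvability forces the evaluation matrix to be square and invertible'' needs a small repair (surjectivity alone gives $|F_\beta|\leq|B|$; the complementary inequality should come from $\sum_b\dim(b\algebra{B})=\dim\algebra{A}$); and the bound $|F_\beta|\leq|\Variety(\ideal{r})|$ via the degree of $r$ presupposes that $r$ is a finite map of that degree, which you assert rather than derive.

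The step you flag as the main obstacle is a genuine gap --- and in fact a gap in the proposition as stated, not merely in your argument. ``Any basis of $\polynomials{n}\big/\ideal{r}$'' must mean a set of polynomial coset representatives, and linear independence of $B$ as functions on $\Variety(\ideal{r})=r^{-1}(0)$ says nothing about independence on the fibers $r^{-1}(\beta)$, $\beta\in\Variety(\ideal{q})$, which are the sets that actually partition $\Variety(\ideal{p})$; note that $0$ need not lie in $\Variety(\ideal{q})$, so $r^{-1}(0)$ may be disjoint from $\Variety(\ideal{p})$ altogether. Concretely, with $\ideal{p}=\ideal{T_4}$, $r=T_2$, $q=T_2$ one has $4=2\cdot 2$, yet the representatives $\{1,\;x\,(T_2(x)-\tfrac{1}{\sqrt{2}})\}$ form a basis of $\mathbb{C}[x]\big/\ideal{T_2}$ whose second element vanishes identically on the two-point fiber over $\tfrac{1}{\sqrt{2}}\in\Variety(\ideal{q})$, so the resulting sum of $\algebra{B}$-submodules has deficient dimension and is not a transversal. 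The provable statement is your Gr\"obner-basis version (a standard monomial basis is a transversal); passing to an arbitrary set of representatives requires the change-of-basis matrix to be invertible over $\algebra{B}$, not just over $\mathbb{C}$, and that is exactly what can fail.
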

\begin{proof}
    If
    $|\Variety(\ideal{p})| \not= |\Variety(\ideal{r})| \cdot
    |\Variety(\ideal{q})|$ the dimensions of
    $\polynomials{n}(y) \big/ \ideal{q}$ and
    $\polynomials{n} \big/ \ideal{r}$ do not multiply to the dimension
    of $\polynomials{n} \big/ \ideal{p}$ so a basis of
    $\polynomials{n} \big/ \ideal{r}$ can not be a transversal of
    $\polynomials{n}(y) \big/ \ideal{q}$.

    For the second part observe that if $\{Q_1,\dots,Q_{q_d}\}$ is a
    basis of $\polynomials{n}(y) \big/ \ideal{q}$ and $\{R_1, \dots,
    R_{r_d}\}$ is a basis of $\polynomials{n} \big/ \ideal{r}$ then
    \begin{equation*}
        \begin{bmatrix}
            R_1 Q_1(r_1,\dots,r_n)  & \dots & R_1
            Q_{q_d}(r_1,\dots,r_n)  \\ 
            \vdots & & \vdots \\
            R_{r_d} Q_1(r_1,\dots,r_n)  & \dots & R_{r_d}
            Q_{q_d}(r_1,\dots,r_n)  
        \end{bmatrix}
    \end{equation*}
    is a basis of $\polynomials{n} \big/ \ideal{p}$ if
    $|\Variety(\ideal{p})| = |\Variety(\ideal{r})| \cdot
    |\Variety(\ideal{q})|$. Hence $\{R_1, \dots, R_{r_d}\}$ is an
    induction of $\polynomials{n}(y) \big/ \ideal{q}$ in
    $\polynomials{n} \big/ \ideal{p}$. \qed
\end{proof}
Even though this renders some of the ideals obeying the decomposition
property~\eqref{eq:DecompositionProperty} useless for their
application with the decomposition
Theorem~\ref{theorem:CooleyTukeyTypeAlgorithms} for Fourier
transforms, the decomposition property is a useful one since there is
another decomposition theorem for the Fourier transform. This version
is the correct version of \cite[Thm.~3]{Pueschel.Roetteler:2008} if
one does not assume that the sizes of the varieties of the decomposed
ideals multiply to the size of the original variety.
\begin{theorem}[FFT algorithms, top-down approach]
    \label{theorem:CooleyTukeyFFTUsingDecomposition}%
    Let $\algebra{A} = \polynomials{n} \big/ \ideal{p}$ such that
    $\ideal{p} = \ideal{q(r)}$ and consider the signal model with
    regular module $N = \algebra{A}$. Let $k = |\Variety(\ideal{q})|$.
    Denote by $M_{\alpha} = \polynomials{n} \big/ \ideal{r - \alpha}$
    for $\alpha \in \Variety(\ideal{q})$. Denote for $i=1,\dots,k$ by
    $d_i = \dim M_{\alpha_i}$, ordered with respect to size. The
    Fourier transform of $N$ with respect to a basis $b$ can then be
    decomposed as
    \begin{equation}
        \label{eq:CooleyTukeyFFTUsingDecomposition}
        P_{b,N} = P \cdot \left( \bigoplus_{i} P_{M_{\alpha_i}}
        \right) 
        \cdot T
        \cdot B,
    \end{equation}
    where $P$ is permutation matrix, $P_{M_{\alpha_i}}$ are the
    Fourier transforms of each $M_{\alpha_i}$, $B$ is a basis change
    between bases of $N$. Denote by $(c_{i,j})$ the entries of the
    Fourier transform of $\polynomials{n} \big/ \ideal{q}$. The matrix
    $T$ is a block matrix of the form
    \begin{equation}
        \label{eq:PseudotensorProductOneWithFourier}
        \begin{bmatrix}
            c_{i,j} \mathbbm{1}_{\min(d_i,d_j)} & 0_{d_i,d_j} \\
            0_{d_j,d_i}^\top
        \end{bmatrix}_{i,j = 1,\dots,k},
    \end{equation}
    where $0_{d_i,d_j}$ is the (possible empty) $d_i \times \max(0,d_j -
    d_i)$ zero matrix.  
\end{theorem}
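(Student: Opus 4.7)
The strategy is a two-stage Chinese remainder decomposition of $\algebra{A}$: first along the fibers of the map $r \colon \Variety(\ideal{p}) \to \Variety(\ideal{q})$, and then pointwise within each fiber. Since $\Variety(\ideal{p}) = r^{-1}(\Variety(\ideal{q})) = \bigsqcup_{i=1}^k \Variety(\ideal{r - \alpha_i})$, radicality of $\ideal{p}$ together with pairwise coprimality of the ideals $\ideal{r - \alpha_i}$ yields
\begin{equation*}
    \algebra{A} \cong \bigoplus_{i=1}^k M_{\alpha_i},
\end{equation*}
and composition of this coarse isomorphism with $\bigoplus_i P_{M_{\alpha_i}}$ reproduces the point-evaluation map underlying $P_{b,N}$, up to a permutation $P$ relating the fiber-grouped point ordering of $\bigsqcup_i r^{-1}(\alpha_i)$ to the ordering of $\Variety(\ideal{p})$ implicit in $P_{b,N}$. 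Writing the coarse isomorphism in matrix form as $T$ in a chosen intermediate basis and letting $B$ be the basis change from $b$ to that intermediate basis then gives the factorization $P_{b,N} = P \cdot \left(\bigoplus_i P_{M_{\alpha_i}}\right) \cdot T \cdot B$, so it remains to construct the intermediate basis such that $T$ has the claimed padded-identity form.

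For the intermediate basis I use the product-form family $\{R_j \cdot Q_\ell(r) : 1 \leq \ell \leq k, \; 1 \leq j \leq d_\ell\}$, with $\{Q_1,\dots,Q_k\}$ a fixed basis of $\polynomials{n}(y)/\ideal{q}$ and with $\{R_1,\dots,R_{d_1}\}$ chosen, via a second application of the Chinese remainder theorem inside $\algebra{A}$ itself, so as to satisfy a nesting property. Concretely, exploiting the ordering $d_1 \geq \dots \geq d_k$, I build compatible bases $\{R_j^{(i)}\}_{j=1}^{d_i}$ of the $M_{\alpha_i}$ by extending a basis of $M_{\alpha_k}$ stepwise through the decreasing sequence $d_k \leq \dots \leq d_1$. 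Using CRT I then select $R_j \in \algebra{A}$ whose image in $M_{\alpha_i}$ equals $R_j^{(i)}$ for $j \leq d_i$ and vanishes for $j > d_i$; such $R_j$ exist because CRT prescribes images independently in each $M_{\alpha_i}$. The resulting family has cardinality $\sum_\ell d_\ell = |\Variety(\ideal{p})| = \dim N$, and its linear independence is immediate from its images in $\bigoplus_i M_{\alpha_i}$.

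With these choices the coarse isomorphism sends $R_j \cdot Q_\ell(r)$ to the tuple whose $i$-th component is $c_{i,\ell}$ times the image of $R_j$ in $M_{\alpha_i}$; by the nesting construction this image equals $R_j^{(i)}$ for $j \leq d_i$ and vanishes for $j > d_i$. Reading this off as the $(i,\ell)$ block of $T$, a $d_i \times d_\ell$ matrix with rows indexed by $\{R_1^{(i)},\dots,R_{d_i}^{(i)}\}$, produces exactly $c_{i,\ell} \mathbbm{1}_{\min(d_i, d_\ell)}$ in the top-left and zeros elsewhere, matching \eqref{eq:PseudotensorProductOneWithFourier}; assembling the four factors then yields \eqref{eq:CooleyTukeyFFTUsingDecomposition}. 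The main technical obstacle is precisely the simultaneous compatibility of the $R_j^{(i)}$ across $i$: it is this nested construction that replaces the clean tensor factorization $\DFT_k \otimes I$ available only when $|\Variety(\ideal{p})| = |\Variety(\ideal{r})| \cdot |\Variety(\ideal{q})|$ (cf.\ Prop.~\ref{proposition:ExistenceOfTransversalByDecomposition}), and in the general case the mismatch between small and large fibers is absorbed into the zero entries of $T$ rather than into $B$ or the $P_{M_{\alpha_i}}$, which is exactly what the pseudo-tensor form \eqref{eq:PseudotensorProductOneWithFourier} records.
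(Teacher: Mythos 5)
Your argument follows essentially the same route as the paper's own proof: both rest on producing a product-form basis $\{R_j\,Q_\ell(r)\}$ of $N$ (the paper writes it as $t_{i,j}u_i(r(x))$ and merely asserts its existence), identifying $T$ as the matrix of the coarse isomorphism $\algebra{A}\to\bigoplus_i M_{\alpha_i}$ in that basis, and absorbing the reordering of $\Variety(\ideal{p})=\bigsqcup_i\Variety(\ideal{r-\alpha_i})$ into the permutation $P$. You are in fact more explicit than the paper at the decisive point: the nested CRT construction of the $R_j$, which forces each $(i,\ell)$ block of $T$ to be $c_{i,\ell}$ times a padded identity, is exactly the compatibility condition the paper leaves unstated.

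The one step you dismiss as ``immediate'' is, however, the crux and is not automatic: the linear independence of $\{R_jQ_\ell(r)\}$, equivalently the invertibility of $T$. Regrouping rows and columns of $T$ by the index $j$ turns it into a block-diagonal matrix whose $j$-th block is the leading principal $m_j\times m_j$ submatrix of $(c_{i,\ell})$, where $m_j=\#\{i \mid d_i\ge j\}$; invertibility of the full evaluation matrix $(c_{i,\ell})$ does not imply invertibility of its leading principal submatrices. Concretely, if $d_1=d_2>d_3$ and the chosen $Q_1,Q_2$ fail to separate $\alpha_1$ from $\alpha_2$ (say $Q_1=1$, $Q_2=y_1$, and $\alpha_1,\alpha_2$ share their first coordinate), the $2\times 2$ block is singular, the family $\{R_jQ_\ell(r)\}$ is linearly dependent, and the factorization breaks down; no reordering of the $\alpha_i$ compatible with the sorting of the $d_i$ repairs this. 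So either the basis $Q_\ell$ and the ordering of the $\alpha_i$ must be chosen so that these minors are nonsingular, or that should be an explicit hypothesis. To be fair, the paper's proof has the same gap (it asserts the existence of the basis $t_{i,j}u_i(r(x))$ without argument), so this is a shared defect rather than one introduced by your construction --- but your phrase ``immediate from its images'' is precisely where it hides.
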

\begin{proof}
    By the decomposition property there exists a basis of $N$ of the
    form
    \begin{equation*}
        \begin{bmatrix}
            t_{1,1} u_1(r(x)), \dots, t_{1,d_1} u_1(r(x)) \\
            t_{2,1} u_2(r(x)), \dots, t_{2,d_2} u_2(r(x)) \\
            \vdots \\
            t_{k,1} u_k(r(x)), \dots, t_{k,d_k} u_k(r(x)) 
        \end{bmatrix}.
    \end{equation*}
    The basis change is from $b$ to this basis. The isomorphism
    $\polynomials{n} \big/ \ideal{q(r)} \longrightarrow \bigoplus_i
    M_{\alpha_i}$ is, using that basis, realized by $T$. By the
    decomposition property the zeros of the $M_{\alpha_i}$ are the
    zeros of $N$, except possible in a different ordering. The theorem
    follows. \qed
\end{proof}
If each $M_\alpha$ is of equal dimension the matrix $T$ is just the
tensor product of the Fourier transform of
$\polynomials{n} \big/ \ideal{q}$ with $\mathbbm{1}_k$.

For the same reasons as in
Prop.~\ref{proposition:ComputationalCostCooleyTukey} one gets again a
fast algorithm if the basis change is sparse and one has a descending
chains of submodules with the decomposition property.

We now give an example, the FFT on a directed hexagonal lattice, which
shows how one can derive FFTs on various lattices from the literature.
The derivation of FFTs on regular directed lattices was first obtained
in~\cite{Mersereau.Speake:1981a}. See~\cite{Zheng.Gu:2014} for more
concrete examples using the classical derivation. The example
illustrates a reverse engineering approach to obtain these algorithms
by algebraic signal processing theory, aswell.
\begin{example}
    \label{example:MersereauHexagonalFFT}%
    We reverse engineer the FFT of a directed hexagonal lattice from
    Mersereau\cite{Mersereau:1979a} by algebraic signal processing
    theory. Assume $N = 2^k$ for some $k > 1$. Recall from
    \cite{Mersereau:1979a} that the discrete Fourier transform for a
    signal $s_{n_1,n_2}$ sampled on a hexagonal lattice is given as
    \begin{equation}
        \label{eq:HexagonalDFT}
        \Fourier(s)_{k_1,k_2} = \sum_{n_1 = 0}^{3 N - 1} \sum_{n_2 = 0}^{N-1}
        s_{n_1,n_2} \exp(- \tfrac{-\pi \I}{3 N} ( (2 n_1 - n_2) (2 k_1-k_2) +
        6 n_2 k_2)). 
    \end{equation}
    From this formula and the definition of Fourier transform
    corresponding to a zero-dimensional
    varieties~\eqref{eq:FourierTrafoFiniteVariety} it is evident that
    the variety is given by the points
    \begin{equation}
        \label{eq:HexagonalFFTVariety}
        \{ (\exp(\tfrac{- \pi \I (2 k_1-k_2)}{3 N}), \exp(\tfrac{2 \pi \I
          k_2}{N})) \; | \; k_1 = 0,\dots, 3 N - 1 \quad k_2 = 0,\dots, N
        - 1\}.  
    \end{equation}
    The basis is determined by \eqref{eq:HexagonalDFT}, aswell, and
    consist of elements $x^{2 n_1 - n_2} y^{n_2}$ for
    $n_1 = 0,\dots,3 N -1$ and $n_2 = 0,\dots, N - 1$.

    The vector space underlying the module is hence given by
    \begin{equation}
        \label{eq:ModuleDirectedHexagonalFFT}
        M = \mathbb{C}[x^2, x y] \big/ \ideal{y^{N} - 1, x^{3 N} -
          y^{N/2}}.
    \end{equation}
    Now we have to expose for which algebra we can find a module
    structure, such that we get a hexagonal model and a FFT-like
    algorithm. Unlike one might speculate at first, one realizes the
    module structure of $M$ not as a module over a polynomial algebra
    in two variables but in three. For this consider the algebra
    $\algebra{A} = \mathbb{C}[X_1,X_2,X_3] \big/ \ideal{X_1^{3 N} - 1,
      X_2^{N/2} - 1, X_3^{N/2} - 1}$, with actions on $M$ given by
    \begin{equation}
        \label{eq:AlgebraActionDirectedHexagonalLattice}%
        \begin{split}
            X_1 \acts p(x,y) &= x^2 \cdot p(x,y), \\
            X_2 \acts p(x,y) &= x y \cdot p(x,y), \\
            X_3 \acts p(x,y) &= x^{-1} y \cdot p(x,y).            
        \end{split}
    \end{equation}
    The resulting visualization graph of the signal model is shown in
    Fig.~\ref{fig:DirectedHexagonalModel}.    

    The signal module can be decomposed in submodules. The choice of
    lattice cosets in \cite{Mersereau:1979a} corresponds to the choice
    of the submodule
    $S = \mathbb{C}[r^2,r s] \big/ \ideal{ s^{N/2} - 1, r^{3 N/2} -
      s^{N/4} }$ with $r = x^2$ and $s = y^2$. We need to find a
    subalgebra and transversal of the underlying algebra, which
    results in the induced module of $S$ being $M$. Consider the
    subalgebra
    $\algebra{B} = \mathbb{C}[Y_1,Y_2,Y_3] \big/ \ideal{Y_1^{3 N/2} -
      1, Y_2^{N/4} - 1, Y_3^{N/4} - 1}$. A transversal of
    $\algebra{B}$ in $\algebra{A}$ is $\{1,X_1,X_2,X_3\}$. The action
    of the transversal elements on $S$ is realized by multiplication
    with the polynomials $\{1,x^2, x y, x^{-1}y\}$. Then one obtains
    \begin{equation}
        \label{eq:HexagonalFFTInducedModule}
        M = S + x^2 S + x y S + x^{-1} y S.
    \end{equation}
    The sublattice corresponding to the transversal element $1$ is
    depicted in Fig.~\ref{fig:DirectedHexagonalLatticeDecomposed}.
    \begin{sidewaysfigure}
        \centering     \rule{0pt}{0.6\textheight}
        \includegraphics[width=\textwidth]{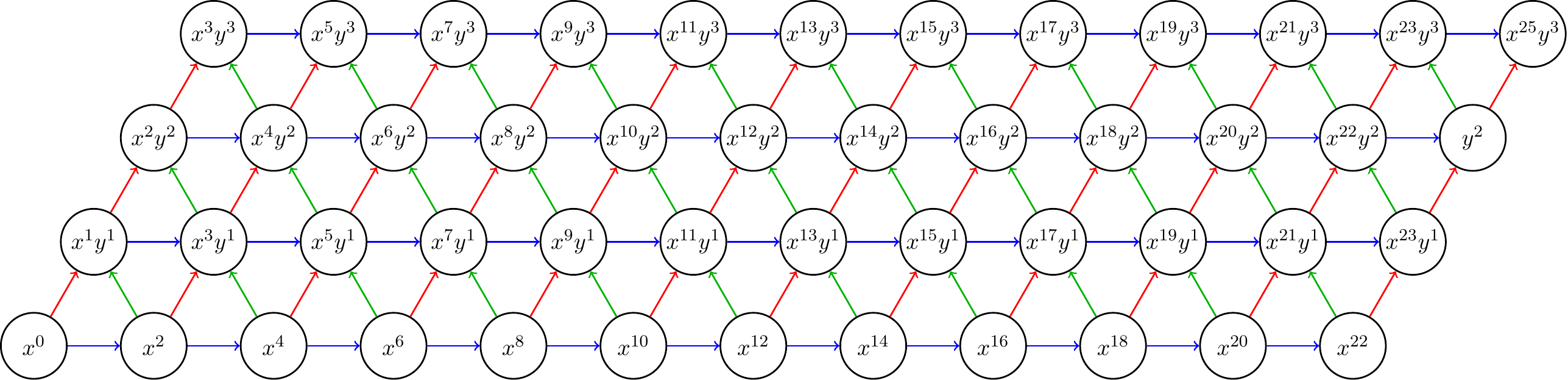}
        \caption{Signal model of a directed hexagonal lattice for
          $N = 4$. The boundary conditions are omitted. Shifts of $X$
          are blue, shifts of $Y$ are red, and shifts of $Z$ are green
          colored.}
        \label{fig:DirectedHexagonalModel}
        \vspace{0.05\textwidth}
        \includegraphics[width=\textwidth]{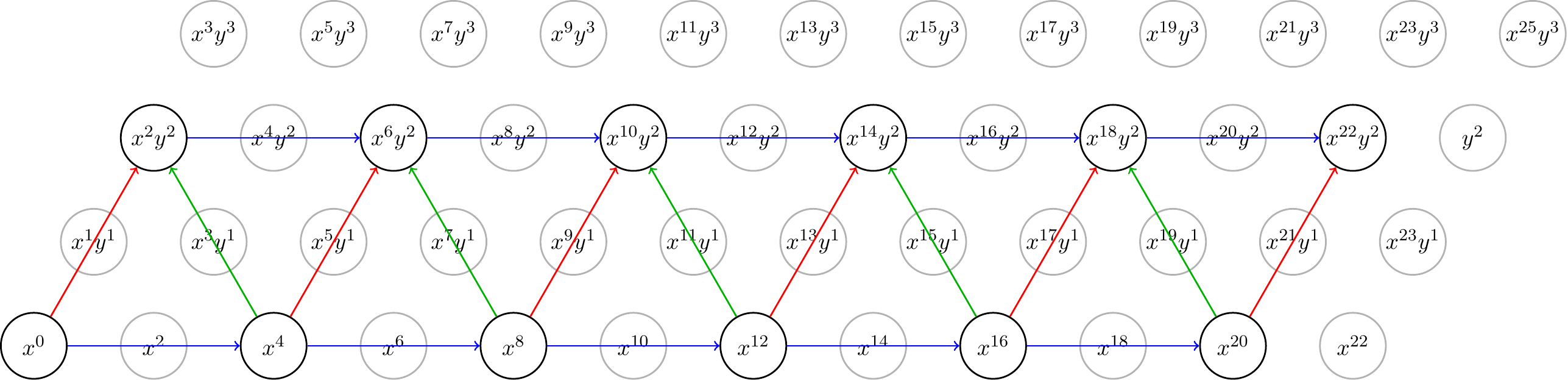} 
        \caption{Sublattice of the hexagonal lattice
          corresponding to the transversal element $1$.}
        \label{fig:DirectedHexagonalLatticeDecomposed}
    \end{sidewaysfigure}
    From the structure of the submodule and the transversal it is
    obvious that the basis change to the induced module is a
    permutation matrix, hence is sparse.

    None of the elements of
    $\Variety(\ideal{y^{N} - 1, x^{3 N} - y^{N/2}})$ gets mapped to
    zero by an element of the transversal. The preimage of each point
    of $V(\ideal{y^{N/2} - 1, x^{3 N/2} - y^{N/4}})$ consists at most
    of four points of $V(\ideal{y^{N} - 1, x^{3 N} - y^{N/2}})$. Hence
    each row of $M$ has at most $4$ non-zero entries, thus $M$ has
    $O(n)$ entries and is sparse. Thus by
    Prop.~\ref{proposition:ComputationalCostCooleyTukey} we have a
    fast algorithm.
\end{example}
\begin{remark}
    \label{remark:HexagonalVSQuincunxLattice}%
    In \cite{Pueschel.Roetteler:2005a} a signal model for the directed
    quincunx lattice was introduced. This signal model used a basis
    similar to the one we used in
    Example~\ref{example:MersereauHexagonalFFT}. These examples show
    that the algebra action on the module is indeed crucial for the
    signal model.
\end{remark}


\section{Orthogonal polynomials and orthogonal transforms}
\label{sec:ChristoffelDarboux}%

Orthogonal polynomials are at the heart of numerical mathematics. In
this section we recall some properties of them, focusing on the
multivariate case. Especially interesting for their usage in algebraic
signal processing are the three-term recurrence relations and the
Christoffel-Darboux formula. Since the theory of multivariate
orthogonal polynomials relies on a formulation not in special bases
but spaces of polynomials of the same degree, we introduce a notion of
equivalence of signal models to capture that ambiguity. Then the
multivariate Christoffel-Darboux formula is used to derive a
multivariate version of the Gauß-Jacobi procedure for finding
orthogonal Fourier transforms.

We then recall the notion of Gaussian cubature. Whilst in the
univariate case Gaussian cubature formulae always exist, this is not
the case in the multivariate setting. Indeed there are few domains
known for which such a formula can be stated. We show that the
existence of a orthogonal Fourier transform for a signal model with
orthogonal polynomials as basis is implied by the existence of a
Gaussian cubature.

In this section we denote by $x = (x_1,\dots,x_d)$. Denote by
$\polynomials[n]{d}$ the space of polynomials in $d$ variables of
degree at most $n$. Let $\SP{\argument, \argument}$ be an inner
product on $\polynomials{d}$. The space of orthogonal polynomials of
degree $n$ is denoted by
\begin{equation}
    \label{eq:OrthogonalPolynomials}
    \orthopolys[n]{d} = \{ p \in \polynomials[n]{d} \; | \; \SP{p,q} = 0
    \text{ for all } q \in \polynomials[n-1]{d} \}.
\end{equation}
If the inner product is given by
$\SP{f,g} = \int_{\mathbb{R}^d} f(x) g(x) \D \mu(x)$ such that the
measure $\mu$ has support with non-empty interior, then
\begin{equation}
    \label{eq:DimensionOrthogonalPolys}
    \dim \orthopolys[n]{d} = \binom{n+d-1}{n} = r_n^d.
\end{equation}
Let $\mathbb{P}_n = (P_{\alpha}^n)_{|\alpha| = n}$, with
$\alpha \in \mathbb{N}_0^d$ a multi-index, denote a basis of
$\orthopolys[n]{d}$ and let $\mathbb{P}_{-1}(x) = 0$. There exist
unique matrices $A_{n,i}$ of size $r_n^d \times r_{n+1}^d$, $B_{n,i}$
of size $r_d^d \times r_n^d$, and $C_{n,i}$ of size
$r_n^d \times r_{n-1}^d$ such that one has the three-term recurrence
relation
\begin{equation}
    \label{eq:ThreeTermRecurrenceRelation}
    x_i \mathbb{P}_n(x)
    = A_{n,i} \mathbb{P}_{n+1}(x) + B_{n,i} \mathbb{P}_{n}(x) +
    C_{n,i} \mathbb{P}_{n-1}.
\end{equation}
The matrices $A_{n,i}$ and $C_{n,i}$ are of full rank. If the
$\mathbb{P}_n$ are even orthonormal one has $C_{n,i} =
A_{n-1,i}^\top$.

From this three-term recurrence relation Xu deduced a multivariate 
Christoffel-Darboux formula~\cite{Xu:1993a}, which reads
\begin{equation}
    \label{eq:ChristoffelDarboux}
    \begin{split}
    &\sum_{k=0}^n \mathbb{P}_k^{\top}(x) H_k^{-1} \mathbb{P}_k(y) \\
    &= 
    \begin{cases}
        \frac{(A_{n,i} \mathbb{P}_{n+1}(x))^\top H_n^{-1}
          \mathbb{P}_n(y) - \mathbb{P}_n^\top(x) H_n^{-1} A_{n,i}
          \mathbb{P}_{n+1}(y)}{x_i - y_i}
        &\text{if } x \not= y \\
        \mathbb{P}_n^\top(x) H_n^{-1} A_{n,i}
        \frac{\partial}{\partial x_i} \mathbb{P}_{n+1}(x) - (A_{n,i}
        \mathbb{P}_{n+1}(x))^\top H_n^{-1} \frac{\partial}{\partial
          x_i} \mathbb{P}_n(x)
        &\text{if } x = y,
    \end{cases}        
    \end{split}
\end{equation}
where the $H_k$ are invertible, symmetric matrices such that
$B_{k,i} H_k$ is symmetric and one has
$A_{k,i} H_{k+1} = H_k C_{k+1,i}^\top$. The matrices $H_k$ are given
as $H_k = \mathcal{L}(\mathbb{P}_k \mathbb{P}_k^\top)$, with
$\mathcal{L}(f) = \int_{\mathbb{R}^d} f(x) \D \mu(x)$. Note that the
value of the sum
$\sum_{k=0}^n \mathbb{P}_k^{\top}(x) H_k^{-1} \mathbb{P}_k(y)$ is
independent on the actual choice of the bases $\mathbb{P}_k$ in the
$\orthopolys[k]{d}$. This follows from the equality
$\mathbb{P}^\top \mathcal{L}(\mathbb{P}_k \mathbb{P}_k^\top)
\mathbb{P} = \mathbb{Q}^\top \mathcal{L}(\mathbb{Q}_k
\mathbb{Q}_k^\top) \mathbb{Q}$ for any choice of bases
$\mathbb{P},\mathbb{Q}$ in $\orthopolys[k]{d}$, cf. \cite{Xu:1993a}.
Note that even though it appears from the right-hand side of the
formula that it depends on the choice of index $i$, the left-hand side
shows that its value actually is independent of $i$.

Another nice property of orthogonal polynomials is that their common
zeros are particular well-behaved. Recall that a common zero of
$\mathbb{P}_n$ is a zero of all the $P_{\alpha}^n$ in $\mathbb{P}_n$.
All common zeros of $\mathbb{P}_n$ are real, distinct and simple, i.e.
at least one $\tfrac{\partial}{\partial x_i} \mathbb{P}_n$ does not
vanish and the set $\mathbb{P}_n$ has at most
$\dim \polynomials[n-1]{d}$ common zeros, cf.~\cite{Xu:2017a}.

We now adopt the point of view, that orthogonality does not hold in
terms of particular bases of $\orthopolys[n]{d}$ but in terms of the
subspaces $\orthopolys[n]{d}$, to algebraic signal models. 
\begin{definition}
    \label{definition:InsignificantDifferentSignalModels}%
    Two signal models $(\algebra{A},M,\Phi_1)$ and
    $(\algebra{A},M,\Phi_2)$, with bases of the modules given by sets
    of orthogonal polynomials $\mathbb{P}_1$ and $\mathbb{P}_2$, are
    called \emph{insignificantly different} if $\mathbb{P}_1$ and
    $\mathbb{P}_2$ are orthogonal with respect to the same positive
    definite linear functional.
\end{definition}
From a signal processing perspective it is interesting when one can
obtain an orthogonal transform. In the univariate case the Gauß-Jacobi
procedure~\cite{Yemini.Pearl:1979a} shows that one can always obtain
an orthogonal transform if the basis of the signal module consists of
orthogonal polynomials. In the multivariate case one has to assume an
additional condition on the number of common zeros of the orthogonal
polynomials.
\begin{theorem}
    \label{theorem:EquivalenceGaussianCubatureAndOrthogonalTransforms}%
    Consider a signal model with underlying variety
    $V = \Variety(\ideal{\mathbb{P}_n})$ with
    $|V| = \dim \polynomials[n-1]{d}$ and let the basis of the
    module be given by $(\mathbb{P}_i)_{i=0}^{n-1}$. Then there exists
    an insignificantly different signal model with orthogonal Fourier
    transform.
\end{theorem}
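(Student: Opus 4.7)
The plan is to adapt the univariate Gauß-Jacobi procedure to the multivariate setting, using the Christoffel-Darboux formula~\eqref{eq:ChristoffelDarboux} in place of its univariate counterpart. The first step is to invoke Definition~\ref{definition:InsignificantDifferentSignalModels} to replace each $\mathbb{P}_k$ by an \emph{orthonormal} basis of $\orthopolys[k]{d}$ with respect to $\mathcal{L}$. Such bases exist because each $\orthopolys[k]{d}$ is a finite-dimensional inner-product space (with $\mathcal{L}$ positive definite), and by Definition~\ref{definition:InsignificantDifferentSignalModels} the resulting signal model is insignificantly different from the original. Crucially, this choice forces $H_k = \mathcal{L}(\mathbb{P}_k \mathbb{P}_k^{\top}) = I_{r_k^d}$ in the three-term recurrence~\eqref{eq:ThreeTermRecurrenceRelation}, so the $H_k^{-1}$ disappears from the Christoffel-Darboux kernel.

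With orthonormal bases chosen, I would form the Fourier matrix $P = [b(\alpha)]$, with $b$ running through $\mathbb{P}_0,\dots,\mathbb{P}_{n-1}$ and $\alpha$ through $V$. The hypothesis $|V| = \dim \polynomials[n-1]{d} = \sum_{k=0}^{n-1} r_k^d$ makes $P$ square. For any pair $\alpha,\beta \in V$, the Gram product of the $\alpha$- and $\beta$-columns of $P$ reads
\begin{equation*}
  \sum_{k=0}^{n-1} \mathbb{P}_k^{\top}(\alpha)\, \mathbb{P}_k(\beta)
  = K_{n-1}(\alpha,\beta),
\end{equation*}
the Christoffel-Darboux kernel at order $n-1$. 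Applying the shifted form of~\eqref{eq:ChristoffelDarboux} yields $K_{n-1}(\alpha,\beta) = 0$ for distinct $\alpha,\beta \in V$: pick any coordinate index $i$ with $\alpha_i \neq \beta_i$, and observe that both numerator terms on the right-hand side contain a factor $\mathbb{P}_n(\alpha)$ or $\mathbb{P}_n(\beta)$, each vanishing because $V = \Variety(\ideal{\mathbb{P}_n})$. On the diagonal one has $K_{n-1}(\alpha,\alpha) = \sum_{k=0}^{n-1} \lVert \mathbb{P}_k(\alpha) \rVert^2 > 0$, since $\mathbb{P}_0$ is a nonzero constant.

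Finally, using the rescaling of target bases permitted by the second formula in~\eqref{eq:FourierTrafoFiniteVarietyMatrixForm}, I would choose the basis $\bigl[\sqrt{K_{n-1}(\alpha,\alpha)}\bigr]$ inside each $\polynomials{n}(x) \big/ \ideal{x_1 - \alpha_1,\dots,x_n - \alpha_n}$. This multiplies $P$ by the diagonal matrix $\diag(1/\sqrt{K_{n-1}(\alpha,\alpha)})$ and turns the Gram matrix into the identity, producing an orthogonal Fourier transform for the (insignificantly different) signal model. The only subtle point is in the second step: formula~\eqref{eq:ChristoffelDarboux} is written in terms of a coordinate index $i$, so one must choose a particular $i$ per pair $(\alpha,\beta)$ with $\alpha_i \neq \beta_i$. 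Since distinct points of $\mathbb{R}^d$ disagree in at least one coordinate and the left-hand side of~\eqref{eq:ChristoffelDarboux} is $i$-independent, the value $K_{n-1}(\alpha,\beta) = 0$ is unambiguous, and I anticipate no serious obstacle beyond this verification.
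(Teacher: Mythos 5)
Your proposal is correct and follows essentially the same route as the paper: pass to an orthonormal (hence insignificantly different) model, use the Christoffel--Darboux formula to show the off-diagonal entries of the Gram matrix $\Fourier^\top\Fourier$ vanish because $\mathbb{P}_n$ vanishes on $V$, and rescale by the inverse square roots of the diagonal. The one point where you diverge is the diagonal: the paper evaluates it via the confluent case of \eqref{eq:ChristoffelDarboux} as $\mathbb{P}_{n-1}^\top(\alpha) A_{n-1,i}\,\tfrac{\partial}{\partial x_i}\mathbb{P}_n(\alpha)$ and appeals to simplicity of the common zeros to see it is nonzero, whereas your direct observation that $K_{n-1}(\alpha,\alpha)=\sum_k \lVert\mathbb{P}_k(\alpha)\rVert^2\geq\lVert\mathbb{P}_0\rVert^2>0$ is simpler and additionally gives the positivity needed for the real square root.
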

\begin{proof}
    If the variety of the signal model consists of the nodes of such a
    Gaussian cubature formula, the underlying variety $\Variety$
    consists of common zeros of all $\mathbb{P}_n$. We can assume that
    all the $(\mathbb{P}_i)_{i=0}^{n-1}$ are orthonormal since this
    choice only leads to an insignificantly different signal model.
    Then the product of the Fourier transform matrices for this signal
    model $\Fourier^\top \cdot \Fourier$ has entries of the form
    $\sum_{k=0}^{n-1} \mathbb{P}_k(\alpha)^\top \mathbb{P}_k(\beta)$
    for $\alpha, \beta \in \Variety$. Now by the Christoffel-Darboux
    formula~\eqref{eq:ChristoffelDarboux} the entries not on the
    diagonal are zero since $\mathbb{P}_n(\alpha) = 0$ for each
    $\alpha \in \Variety$. On the other hand the diagonal entries have
    the form
    \begin{equation*}
        \mathbb{P}_{n-1}^\top(\alpha) A_{n-1,i}
        \tfrac{\partial}{\partial x_i} \mathbb{P}_n(\alpha).
    \end{equation*}
    Since the common zeros of $\mathbb{P}_n$ are simple, i.e. at least
    one partial derivative of $\mathbb{P}_n$ is not zero, cf.
    \cite[Thm.~2.13]{Xu:2017a}, we can invert the diagonal entries,
    which do not depend on $i$. If we now choose in the
    one-dimensional, irreducible component belonging to $\alpha$ the
    basis
    $\Big(\sqrt{\mathbb{P}_{n-1}^\top(\alpha) A_{n-1,i}
      \tfrac{\partial}{\partial x_i} \mathbb{P}_n(\alpha)}\Big)$ we obtain
    an orthogonal Fourier transform $\Fourier^{\mathsf{orth}}$. This
    can be seen as follows. Consider the diagonal matrix
    \begin{equation*}
       \sqrt{D} = \mathsf{diag}\left(1 \Big/ \sqrt{\mathbb{P}_{n-1}^\top(\alpha)
      A_{n-1,i} \tfrac{\partial}{\partial x_i} \mathbb{P}_n(\alpha)} \;
    \bigg| \; \alpha \in \Variety\right). 
    \end{equation*}
    Then $\Fourier^{\mathsf{orth}} = \sqrt{D} \Fourier$, hence we
    obtain from the above discussion
    \begin{equation*}
        \Fourier^{\mathsf{orth}, \top} \cdot \Fourier^{\mathsf{orth}}
        = \Fourier^\top \sqrt{D} \sqrt{D} \Fourier
        = D \Fourier^\top \Fourier
        = \mathbbm{1},
    \end{equation*}
    since diagonal matrices commute with all matrices. The theorem is
    proven. \qed
\end{proof}
Now the condition that one has
$|\Variety(\ideal{\mathbb{P}_n})| = \dim \polynomials[n-1]{d}$ is very
restrictive. Indeed it is the same condition as for the existence of a
Gaussian cubature formula and there are few multi-dimensional regions
known for which Gaussian cubature formulas exist.

Recall that a cubature formula for the measure $\mu$ is a finite sum
that approximates integrals $\int_{\mathbb{R}^d} \argument \D \mu$. If
one has
\begin{equation}
    \label{eq:CubatureFormula}
    \int_{\mathbb{R}^d} f(x) \D \mu
    = \sum_{k=1}^N w_k f(x_k),
\end{equation}
with weights $w_k \in \mathbb{R}$ and nodes $x_k \in \mathbb{R}^d$,
for all $f \in \polynomials[2n-1]{d}$ and this does not hold for at
least one element of $\polynomials[2n]{d}$, the cubature is said to be
of degree $2n-1$. For the number of nodes $N$ one has
\begin{equation}
    \label{eq:NumberOfCubatureNodesLowerBound}
    N \geq \dim \polynomials[n-1]{d}
\end{equation}
and if the bound is reached the cubature formula is called Gaussian. A
Gaussian cubature formula exists if and only if $\mathbb{P}_n$ has
$\dim \polynomials[n-1]{d}$ common zeros~\cite{Mysovskikh:1976a}. The
nodes of the cubature formula are then precisely the common zeros of
$\mathbb{P}_n$. Hence if there exists a Gaussian cubature formula one
can ensure the existence of an orthogonal transform for a
corresponding signal model. Now the existence of Gaussian cubature
formulas is rare, the first class of examples in any dimension has
been described in \cite{Berens.Schmid.Xu:1995a} and other examples
have been discussed in
\cite{Li.Sun.Xu:2008,Moody.Patera:2011,Hrivnak.Motlochova.Patera:2016}.
Thus the applicabilty of the multivariate Gauss-Jacobi procedure is
restricted to certain special cases. One of these special cases will
be investigated in the next section.

\section{FFT for weight lattices}
\label{sec:UndirectedLattices}%

While directed signals are of interest in the analysis of
time-dependent data like time-series, undirected signals are
considered in the analysis of space-dependent data like images.

In the one-dimensional case the undirected counterparts to the
directed discrete Fourier transform are the discrete sine and cosine
transforms. In \cite{Pueschel.Moura:2008c} the signal models and fast
transforms for all 16 discrete sine and cosine transforms were deduced
in algebraic signal processing. In this section undirected signal
models and their fast transforms for a special class of lattices, the
weight lattices of semi-simple Lie groups, are derived. The approach
mimics the ansatz of \cite{Pueschel.Moura:2008c} for the DCT-3. This
ansatz relies on Chebyshev polynomials of the first kind. This family
of polynomials is one of the only two in one variable obeying the
decomposition property.

We start by recalling the ansatz for DCT-3. Consider the Chebyshev
polynomials of the first kind $T_n(\cos \theta) = \cos n \theta$. They
obey the shift property
\begin{equation}
    \label{eq:ShiftPropertyOneDimChebyshev}
    x T_n(x) = \tfrac{1}{2} ( T_{n-1}(x) + T_{n+1}(x)).
\end{equation}
Consider the signal model with filter algebra
$\algebra{A} = \mathbb{C}[x] \big/ \ideal{T_n}$,the regular module
$M = \algebra{A}$ as signals and
$\Phi \colon s \mapsto \sum_i s_i T_i$ determining the Chebyshev
polynomials as basis. By \eqref{eq:ShiftPropertyOneDimChebyshev} the
visualization of the signal model is an undirected lattice as
illustrated in Fig.~\ref{fig:DCTModel}.
\begin{figure}
    \centering
    \includegraphics[width=0.8\textwidth]{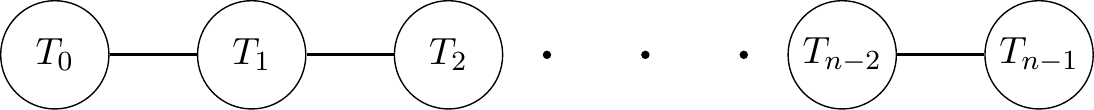}
    \caption{The visualization of the discrete cosine transform signal
      model.}
    \label{fig:DCTModel}
\end{figure}
The choice of the basis $\{T_0,T_1,\ldots, T_n\}$ leads to the
discrete cosine transform of type 3. The other types of discrete
cosine and sine transforms can be obtained by a combination of
different choices of kinds of Chebyshev polynomials and roots of them,
cf.~\cite{Pueschel.Moura:2003a}.

One particular nice property of the Chebyshev polynomials is the
decomposition property
\begin{equation}
    \label{eq:DecompositionPropertyChebyshevOneDim}
    T_{k \cdot n}(x) = T_n(T_k(x)).
\end{equation}
Up to similarity the Chebyshev polynomials and the monomials $x^n$ are
the only polynomials in one variable subject to the decomposition
property.

By the above considerations it is natural to search for several
variable analogues of the Chebyshev polynomials. Fortunately there is
a rather mature theory of multivariate Chebyshev polynomials
available~\cite{Hoffman.Withers:1988a} which has an intimate
connection to Lie theory. This generalization is based on the
stretching and folding property, a geometric interpretation of the
decomposition, of the one-dimensional Chebyshev polynomials, i.e. the
map
\begin{equation}
    \label{eq:ChebyshevFoldingProperty}
    \cos^{-1} \circ T_n \circ \cos
\end{equation}
stretches the interval $[0,1]$ $n$-times and folds it back at the
integers. In \cite{Hoffman.Withers:1988a} it was shown that the
foldable figures in higher dimensions are in one-to-one correspondence
to the Weyl groups of root systems.

For the correct generalisation of the appearing components we need to
recall some definitions and tools from Lie theory. This will include
an explanation of the domain of $\theta$ and the index set, aswell.

The first definition we need is that of a root system and its dual,
the coroot system. Root systems were introduced by Killing for the
classification of the complex, simple Lie
algebras~\cite{Killing:1888a}.
\begin{definition}
    \label{def:RootSystem}%
    A crystallographic root system in a finite-dimensional euclidean
    space $(\mathbb{R}^d, \SP{\argument, \argument})$ is a finite set
    $\roots$ of non-zero vectors, the so-called roots, which span
    $\mathbb{R}^d$ subject to the conditions
    \begin{enumerate}[i.)]
        \item  $r \cdot a \in \roots$ then $r = \pm 1$ for all $\alpha
        \in \roots$,
        \item closedness under reflections through the hyperplanes
        perpendicular to the roots, i.e.
        \begin{equation}
            \label{eq:ClosednessReflectionHyperplaneRoots}
            \sigma_\alpha(\beta)
            = \beta - 2 \frac{\SP{\alpha,\beta}}{\SP{\alpha,\alpha}} \alpha
            \in \roots
        \end{equation}
        for all $\alpha,\beta \in \roots$,
        \item for any
        $\alpha,\beta \in \roots$ we have
        $2 \tfrac{\SP{\alpha,\beta}}{\SP{\alpha,\alpha}} \in
        \mathbb{Z}$. 
    \end{enumerate}
    The set of integer linear combinations of the roots is termed root
    lattice
    \begin{equation}
        \label{eq:RootLattice}
        Q = \spann_{\mathbb{Z}}\{\alpha \in \roots\} \subseteq
        \mathbb{R}^d.  
    \end{equation}
    The coroot of a root
    $\alpha \in \roots$ is
    \begin{equation}
        \label{eq:Coroot}
        \alpha^\vee = \frac{2}{\SP{\alpha,\alpha}} \alpha.
    \end{equation}
    The coroots form a root system which is denoted by $\roots^\vee$.
    The coroot lattice $Q^\vee$ is the $\mathbb{Z}$-span of the
    coroots
\end{definition}
There are at most two different root lengths for an irreducible root
system, i.e.\ one which is not a combination of root systems with
mutually orthogonal spaces. The irreducible root systems can be
classified using Coxeter-Dynkin diagrams. There are four infinite
series $A_n, B_n, C_n, D_n$ and five exceptional root systems
$E_6, E_7, E_8, F_4, G_2$, cf. Fig.~\ref{fig:CoxeterDynkin}.
\begin{figure}
    \centering
    \includegraphics[width=\textwidth]{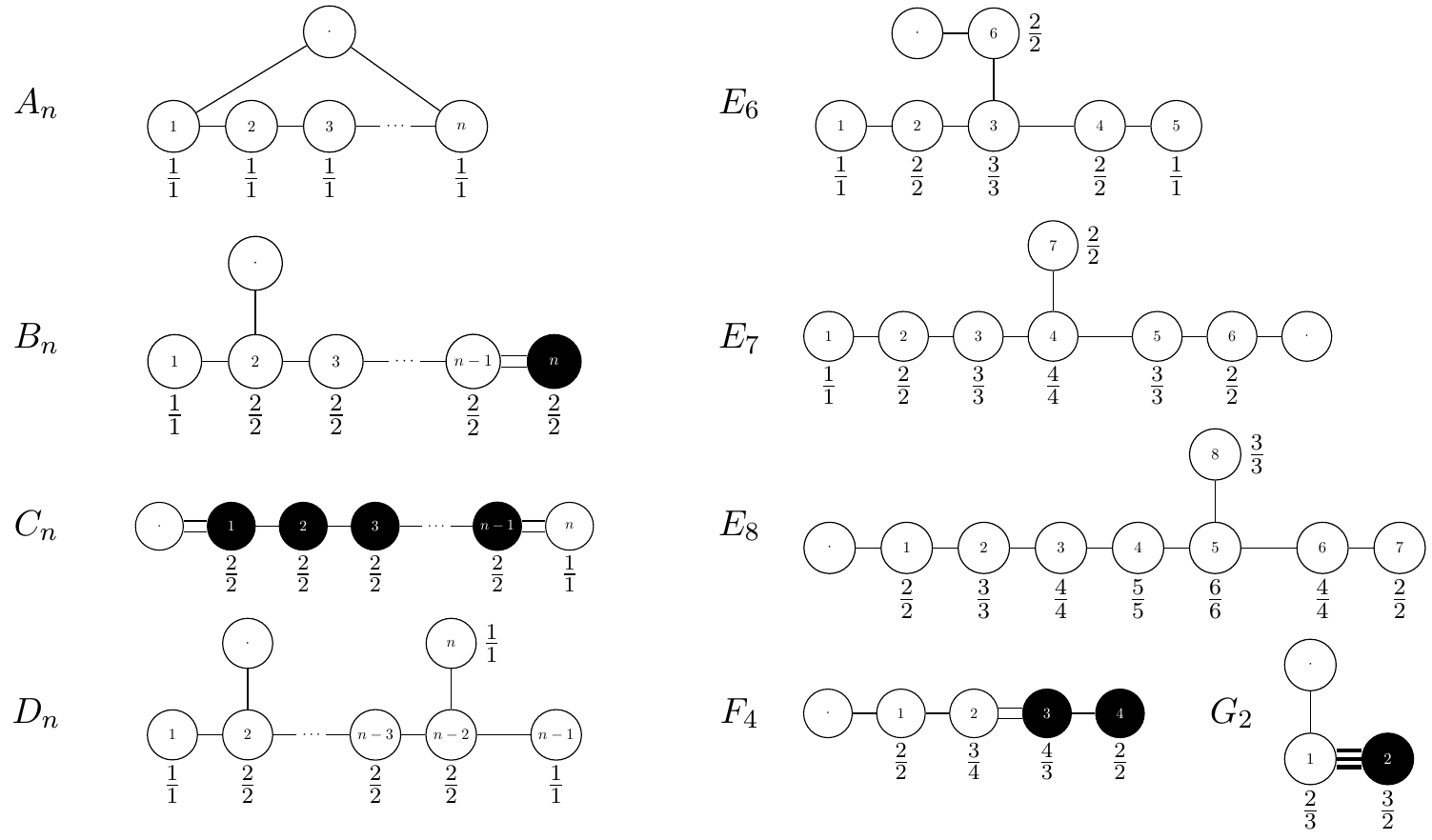}
    \caption{Affine Coxeter-Dynkin diagrams for the reduced,
      crystallographic root systems. The dotted node corresponds to
      the lowest root $-\alpha_0$, the numbered nodes to the simple
      roots $\alpha_i$. Open circles are long roots, while filled
      nodes indicate short roots. The marks and comarks are shown
      below the nodes as $\tfrac{m_i}{m_i^\vee}$. The angle between
      two roots depends on the multiplicity $k$ of the edge between
      them and is given as $4 \cos^2 \theta = k$ and
      $\cos \theta \leq 0$, i.e.
      $\tfrac{\pi}{2}, \tfrac{2 \pi}{3}, \tfrac{3 \pi}{4}, \tfrac{5
        \pi}{6}$ with length ratio being arbitrary, $1, \sqrt{2}, \sqrt{3}$
      for $k=0,1,2,3$, respectively. }
    \label{fig:CoxeterDynkin}
\end{figure}
One can choose a basis
$\simpleroots = \{\alpha_1,\dots,\alpha_d\} \subseteq \roots$ of the
root system such that one has $\alpha = \sum_{i=1}^d c_j \alpha_j$
with all $c_j \in \mathbb{Z}$ of the same sign. The $\alpha_i$ are
called simple roots. The simple roots divide the root system into
positive roots $\roots^+$ and negative roots $\roots^-$. The simple
roots introduces a partial order on the roots, aswell. The partial
order is defined by $\lambda \succeq \mu$ if the expansion of
$\lambda - \mu$ in simple roots has non-negative coefficients only.
Then $\lambda$ is called higher than $\mu$. The highest root is
\begin{equation}
    \label{eq:HighestRoot}
    \alpha_0 = m_1 \alpha_1 + \dots + m_d \alpha_d,
\end{equation}
with positive integers $m_i$. The $m_i$ are called the marks of the
root system. The marks of the coroot system are called the comarks of
the initial root system and denoted by $m_1^\vee, \dots, m_d^\vee$.

The Weyl group of a root system $\roots$ is the group generated by
the reflections
\begin{equation}
    \label{eq:WeylGroup}
    W = \ideal{ \sigma_\alpha \; | \; \alpha \in \roots}.
\end{equation}
The $\mathbb{Z}$-dual of $Q$ is the coweight lattice $P^\vee$, while
the $\mathbb{Z}$-dual of $Q^\vee$ is the weight lattice $P$. The
generators of $P$ are the fundamental weights $\omega_j$ and the
generators of $P^\vee$ are the fundamental coweights $\omega_j^\vee$.

The coroot lattice acts on $\mathbb{R}^d$ by translation and the
affine Weyl group is the semi-direct product
\begin{equation}
    \label{eq:AffineWeylGroup}
    W_{\text{aff}} = W \ltimes Q^\vee. 
\end{equation}
The simplex $F = \mathbb{R}^d \big/ Q^\vee$ tiles $\mathbb{R}^d$ under
the action of the affine Weyl group and is called the fundamental Weyl
chamber. One can use the fundamental coweights to describe the
fundamental Weyl chamber as convex hull
\begin{equation}
    \label{eq:FundamentalRegion}
    F = \mathsf{conv}\left\{0,\frac{\omega_1^\vee}{m_1}, \dots,
        \frac{\omega_d^\vee}{m_d} \right\}.    
\end{equation}
This fundamental region replaces the interval $[0,1]$ as stretching
and folding region for multivariate Chebyshev polynomials. In
Fig.~\ref{fig:RootsAndWeightsFundamentalDomain} the root systems of
type $A_2$ and $C_2$ are shown together with the simple scaled
coweights and the fundamental domains.
\begin{figure}
    \centering
    \includegraphics[width=0.65\textwidth]{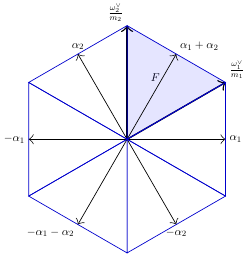}

    \vspace{0.05\textwidth}

    \includegraphics[width=1\textwidth]{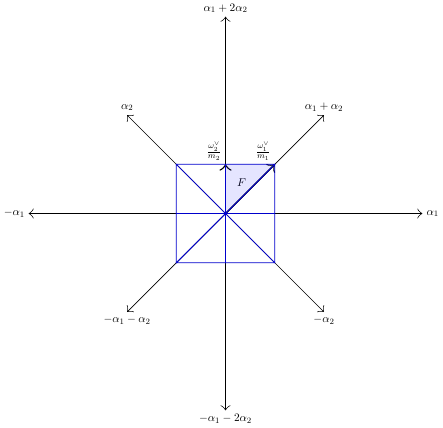}
    \caption{The root systems of type $A_2$ (upper) and $C_2$ (lower)
      together with the fundamental region $F$ (shaded region) and the
      image of $F$ under the action of the Weyl group.}
    \label{fig:RootsAndWeightsFundamentalDomain}
\end{figure}

The dual pairing
$\dualpair{\argument, \argument} \colon P \times \mathbb{R}^d \big/
Q^\vee \longrightarrow \mathbb{C}$ is given by
\begin{equation}
    \label{eq:DualPairingWeightsAndFundamentalDomain}
    \dualpair{\lambda, \theta} = \exp(2 \pi \I \SP{\lambda, \theta}). 
\end{equation}
The Weyl group, which is isomorphic to a group of integer matrices,
acts on $P$ and $\mathbb{R}^d \big/ Q^\vee$. Symmetrization of the
dual pairing with respect to the corresponding Weyl group now leads to
the definition of multivariate Chebyshev polynomials.
\begin{definition}
    \label{def:MultivariateChebyshevPolynomials}%
    Let $W$ be a Weyl group of a root system $\roots$ with weight
    lattice $P$ and coroot lattice $Q^\vee$. The multivariate
    Chebyshev polynomials of the first kind of weight $\lambda \in P$
    is
    \begin{equation}
        \label{eq:MultivariateChebyshevPolynomials}
        T_\lambda(x_1,\dots,x_d) = \frac{1}{|W|} \sum_{w \in W}
        \dualpair{\lambda, w \theta},
    \end{equation}
    for $\theta \in F$. The multivariate
    Chebyshev polynomials are polynomials in the variables
    \begin{equation}
        \label{eq:VariablesForMultivariateChebyshevPolynomials}
        x_k = \dualpair{\omega_k, \theta},
    \end{equation}
    with $\theta \in F$. 
\end{definition}
Indeed, for the Weyl group $W(A_1)$ of type $A_1$ one gets back the
original definition of Chebyshev polynomials, as the root system is
then $\roots_{A_1} = \{1,-1\}$ and is equal to the coroot lattice, the
only simple root is $\{1\}$, the Weyl group is $W(A_1) = \{1,-1\}$,
the root and coroot lattice is $\mathbb{Z}$, the weight lattice is
$P = \mathbb{Z}$. This leads to $F = [0,1]$
and
$T_n(x) = \dualpair{n,\theta} = \tfrac{1}{2} (\exp(2 \pi \I n \theta)
+ \exp(-2 \pi \I n \theta) = \cos(n \theta)$.

The simplex $F$ gets transformed under the variable change
$x_k = \dualpair{\omega_k, \theta}$ to a cusped region. For example in
case of the root system $A_2$ the fundamental region $F$ is an
equilateral triangle which gets transformed to a deltoid under
$(x_1,x_2)$. In Fig.~\ref{fig:Deltoid} the cusped regions for the
irreducible two-dimensional root systems are shown.
\begin{figure}
    \centering
    \includegraphics[width=0.3\textwidth]{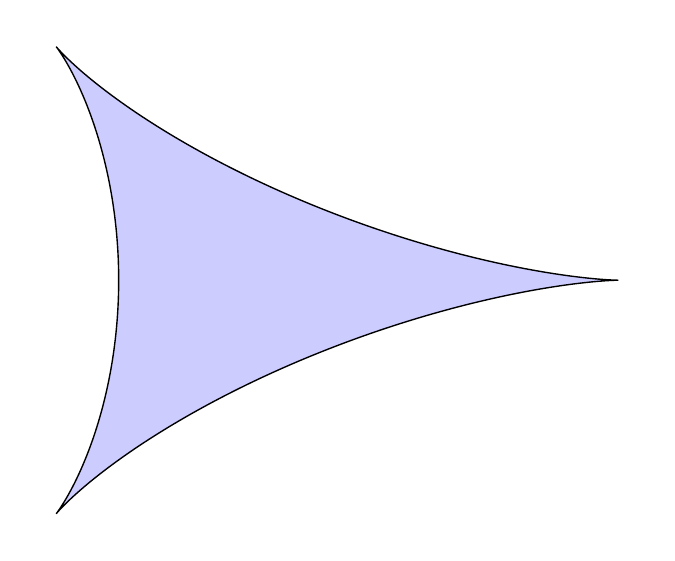}
    \includegraphics[width=0.3\textwidth]{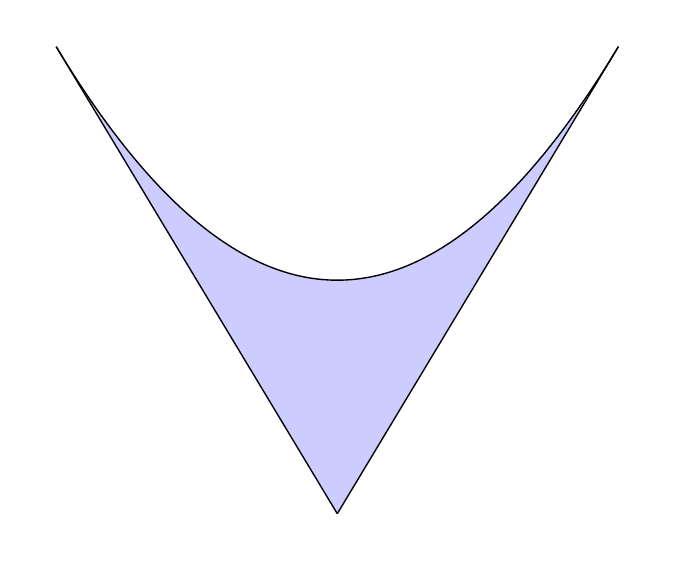}
    \includegraphics[width=0.3\textwidth]{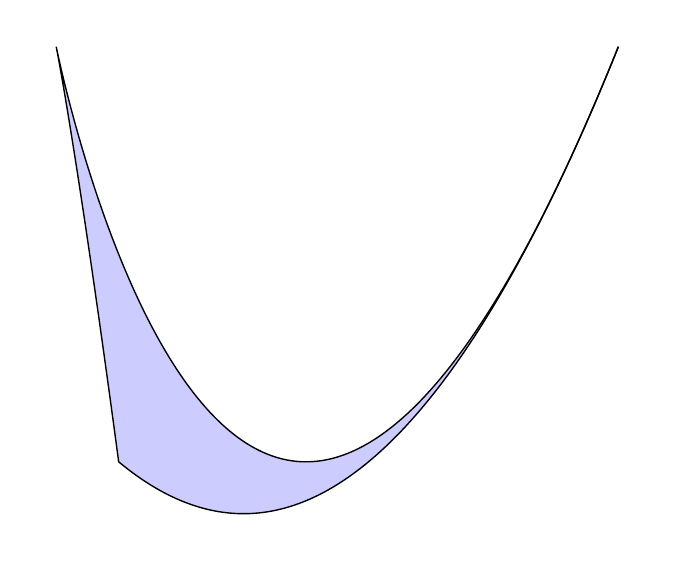}
    \caption{The image of the fundamental region $F$ under the
      variable change in case of $A_2$, $C_2$, and $G_2$,
      respectively.}
    \label{fig:Deltoid}
\end{figure}

The multivariate Chebyshev polynomials share many of the nice
properties the univariate ones posess. We list the properties used in
the sequel.
\begin{proposition}
    \label{proposition:PropertiesMultivariateChebyshev}%
    The multivariate Chebyshev polynomials associated to a Weyl group
    $W$ are subject to
    \begin{enumerate}[i.)]
        \item \label{item:InvarianceUnderAction}%
        invariance with respect to the action of the Weyl group on the
        weight indices
        \begin{equation}
            \label{eq:InvarianceWeylActionIndices}
            T_{w \lambda}  = T_{\lambda}
        \end{equation}
        and invariance with respect to the affine Weyl group on the
        argument in the fundamental domain
        \begin{equation}
            \label{eq:InvarianceActionArgument}
            T_{\lambda}( \dualpair{\omega_1,w \theta}, \dots,
            \dualpair{\omega_d,w \theta})
            = 
            T_{\lambda}( \dualpair{\omega_1,\theta}, \dots,
            \dualpair{\omega_d,\theta}),
        \end{equation}
        \item \label{item:ShiftProperty}%
        the shift property
        \begin{equation}
            \label{eq:ShiftProperty}
            T_{\lambda_1} T_{\lambda_2} = \tfrac{1}{|W|} \sum_{w \in
              W} T_{\lambda_1 + w \lambda_2},
        \end{equation}
        \item \label{item:DecompositionProperty}%
        the decomposition property
        \begin{equation}
            \label{eq:DecompositionProperty}
            T_{k \lambda}
            = T_{\lambda}(T_{k \omega_1},\dots, T_{k \omega_d})  
        \end{equation}
        for $\lambda \in P$ and $k \in \mathbb{Z}$.
    \end{enumerate}
\end{proposition}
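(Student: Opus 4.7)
The three assertions all unfold from the explicit definition $T_\lambda(\theta) = \tfrac{1}{|W|} \sum_{w \in W} \dualpair{\lambda, w\theta}$, so the plan is to dispose of them by careful reindexing and a scaling trick, in increasing order of subtlety.

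For part (\ref{item:InvarianceUnderAction}), the plan is to note that the dual pairing satisfies $\SP{w\lambda, \theta} = \SP{\lambda, w^{-1}\theta}$ because $W$ acts by orthogonal transformations. Substituting $w\lambda$ for $\lambda$ in the defining sum and relabelling $w' \mapsto w^{-1}w'$ then leaves the average unchanged; this yields the first invariance. For invariance in the argument, one separately checks the two factors of $W_{\mathrm{aff}} = W \ltimes Q^\vee$: invariance under the linear $W$-action is again a relabelling of the summation index, while invariance under translations by $q \in Q^\vee$ uses that $\lambda \in P$ is integral on $Q^\vee$, so that $\exp(2\pi \I \SP{\lambda, q}) = 1$. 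This also shows that $T_\lambda$ is well-defined as a function on $\mathbb{R}^d \big/ Q^\vee$.

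For part (\ref{item:ShiftProperty}), the plan is to expand
\begin{equation*}
 T_{\lambda_1}(\theta) T_{\lambda_2}(\theta) = \frac{1}{|W|^2} \sum_{w_1, w_2 \in W} \dualpair{w_1^{-1}\lambda_1 + w_2^{-1}\lambda_2, \theta},
\end{equation*}
then substitute $w_2 = w_1 u$ so that the inner sum over $w_1$ becomes the Weyl average of $\dualpair{\lambda_1 + u^{-1}\lambda_2, \,\cdot\,}$. By part (\ref{item:InvarianceUnderAction}) this inner average equals $T_{\lambda_1 + u^{-1}\lambda_2}$; a final reindex $u \mapsto u^{-1}$ collapses the outer $1/|W|^2$ to $1/|W|$ and gives the claimed identity. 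No delicacy beyond bookkeeping of the summation.

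For part (\ref{item:DecompositionProperty}), the main idea is the scaling trick $\theta \mapsto k\theta$. Since $W$ acts linearly it commutes with scalar multiplication, and the bilinearity of $\SP{\,\cdot\,,\,\cdot\,}$ gives $\dualpair{\lambda, w(k\theta)} = \dualpair{k\lambda, w\theta}$, so a direct computation yields $T_\lambda(k\theta) = T_{k\lambda}(\theta)$. The remaining step is to combine this with the fact that $T_\lambda$ is a polynomial in the coordinates $T_{\omega_1}, \dots, T_{\omega_d}$, so that evaluating $T_\lambda$ at $k\theta$ is the same as evaluating its polynomial expression at the point with coordinates $T_{\omega_i}(k\theta) = T_{k\omega_i}(\theta)$; this latter equality is again the scaling identity applied to $\lambda = \omega_i$.

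The main obstacle, and the place where care is required, is the last step of (\ref{item:DecompositionProperty}): one must justify that $T_\lambda$ can legitimately be regarded as a polynomial in $(T_{\omega_1},\dots,T_{\omega_d})$ so that the substitution makes sense. This is the standard Chevalley-type statement that the $W$-invariant exponential sums form a polynomial ring generated by the orbit sums of the fundamental weights; within this paper it is implicit in Definition~\ref{def:MultivariateChebyshevPolynomials}, and once acknowledged the decomposition is immediate from the scaling identity. The other two properties are essentially formal consequences of the symmetries of the defining averaged sum.
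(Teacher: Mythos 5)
The paper does not actually prove this proposition: it is presented as a list of known properties of multivariate Chebyshev polynomials ("We list the properties used in the sequel"), with the surrounding text deferring to the literature (Eier--Lidl, Ricci, Hoffman--Withers) for their derivation. So there is no in-paper proof to compare against; your argument is the standard one and is essentially sound, and you correctly isolate the one genuinely nontrivial input, namely the Chevalley-type fact that the $W$-invariant exponential sums form a polynomial ring generated by the orbit sums of the fundamental weights, without which the substitution in part (iii.) is meaningless. Two small repairs are needed. First, in part (ii.) your coset substitution is on the wrong side: with $w_2 = w_1 u$ the exponent becomes $w_1^{-1}\lambda_1 + u^{-1}w_1^{-1}\lambda_2$, which is \emph{not} of the form $v^{-1}(\lambda_1 + u^{-1}\lambda_2)$ unless $W$ is abelian — and the paper's main examples have $W(A_2)\cong S_3$. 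You should instead set $w_2 = u w_1$ (equivalently $u = w_1 w_2^{-1}$), so that the summand is $\dualpair{\lambda_1 + u^{-1}\lambda_2,\, w_1\theta}$ and the inner sum over $w_1$ really is $|W|\,T_{\lambda_1+u^{-1}\lambda_2}$; the final reindexing $u\mapsto u^{-1}$ then goes through as you describe. Second, in part (iii.) note that the paper's coordinates $x_k = \dualpair{\omega_k,\theta}$ must be read as the symmetrized quantities $T_{\omega_k}(\theta)$ (as the $A_1$ sanity check in the paper implicitly does); only then is $T_\lambda$ a polynomial in these coordinates and only then does $T_{\omega_i}(k\theta) = T_{k\omega_i}(\theta)$ combine with the scaling identity $T_\lambda(k\theta)=T_{k\lambda}(\theta)$ to give the claim. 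With those adjustments your proof is complete.
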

The properties \ref{item:InvarianceUnderAction}.) and
\ref{item:ShiftProperty}.) of
Prop.~\ref{proposition:PropertiesMultivariateChebyshev} yield a
recursion relation if one uses the shift relation with the
$x_k = \dualpair{\omega_k, \theta}$.

The standard grading on the Chebyshev polynomials is insufficient for
our purposes as then, except for the case $A_n$, we do not obtain the
Gröbner basis property for $T_{n \omega_1}, \dots, T_{n \omega_d}$.
Replacing the standard degree with the m-degree introduced in
\cite{Moody.Patera:2011} solves this issue. The m-degree weights the
weights with the comarks.
\begin{definition}
    \label{definition:MDegree}%
    Let $\lambda \in P$ then its m-degree is
    \begin{equation}
        \label{eq:MDegree}
        \mdeg(\lambda) = \SP{\lambda, \alpha_0^\vee}.
    \end{equation}
    A monomial $x_1^{\lambda_1} \dots x_d^{\lambda_d}$ is then of
    m-degree $\mdeg(\lambda) = \mdeg((\lambda_1,\dots,\lambda_d))$.

    The \emph{$\mathrm{m}$-graded lexicographical ordering} on the monomials is
    defined by ordering the monomials with respect to $\mdeg$ and then
    breaking ties by the lexicographical order on the variables.
\end{definition}
In case of type $A_n$ all marks are equal to $1$, so in these cases
the m-degree coincides with the standard degree. The leading
monomial of the Chebyshev polynomial $T_\lambda$ with respect to the
m-graded lexicographical ordering is
$x_1^{\lambda_1} \dots x_d^{\lambda_d}$. By the recursion relations
obtained from the shift relation the leading monomials with respect to
m-graded lexicographical ordering of the polynomials
$T_{n \omega_1}, \dots, T_{n \omega_d}$ are disjoint. Hence they form
a Gröbner basis for the ideal they generate.
\begin{example}
    \label{example:ChebyshevCooleyTukeyA2}%
    The proposed algorithm for $A_2$, based on
    Theorem~\ref{theorem:CooleyTukeyTypeAlgorithms}, is an alternative
    to the one proposed in \cite{Pueschel.Roetteler:2008}, which
    relied on a version of
    Theorem~\ref{theorem:CooleyTukeyFFTUsingDecomposition}. The shift
    relation reads in this case
    \begin{equation}
        \label{eq:ShiftRelationA2}
        \begin{split}
            x_1 T_{\lambda_1, \lambda_2}
            &= \tfrac{1}{3} \left( T_{\lambda_1 + 1, \lambda_2} +
                T_{\lambda_1, \lambda_2 -1} + T_{\lambda_{1} - 1,
                  \lambda_2 + 1} \right) \\
            x_2 T_{\lambda_1, \lambda_2}
            &= \tfrac{1}{3} \left( T_{\lambda_1, \lambda_2 + 1} +
                T_{\lambda_1 - 1, \lambda_2} + T_{\lambda_1 + 1,
                  \lambda_2 - 1} \right).
        \end{split}
    \end{equation}
    A set of sufficient starting conditions for running the
    recursion is
    \begin{equation}
        \label{eq:StartingConditionsA2}
        \begin{split}
            T_{0,0} &= 1, \\
            T_{1,0} &= x, \\
            T_{0,1} &= y, \\
            T_{1,1} &= \tfrac{3}{2} x y - \tfrac{1}{2}.
        \end{split}
    \end{equation}
    We consider the signal model consisting of
    \begin{equation}
        \label{eq:A2ChebyshevSignalModel}
        \begin{split}
            \algebra{A}
            &= \mathbb{C}[x,y] \big/
            \ideal{T_{0,n}, T_{n,0}}, \\
            M
            &= \algebra{A} \text{ (as regular module)}, \\
        \end{split}
    \end{equation}
    and
    \begin{equation}
        \label{eq:A2ChebyshevSignalModelZTransform}
        \begin{split}
            \Phi \colon \mathbb{C}^{n^2}
            &\longrightarrow M, \\
            \Phi(s)
            &= \sum_{k ,\ell =0}^n s_{k,\ell}
            T_{k,\ell}. 
        \end{split}
    \end{equation}
    The visualization unveils the hexagonal lattice underlying this
    signal model, cf. Fig.~\ref{fig:UndirectedHexagonalLatticeA2},
    obtained from the shift relations~\eqref{eq:ShiftRelationA2}.
    \begin{figure}
        \centering
        \includegraphics[width=0.45\textwidth]{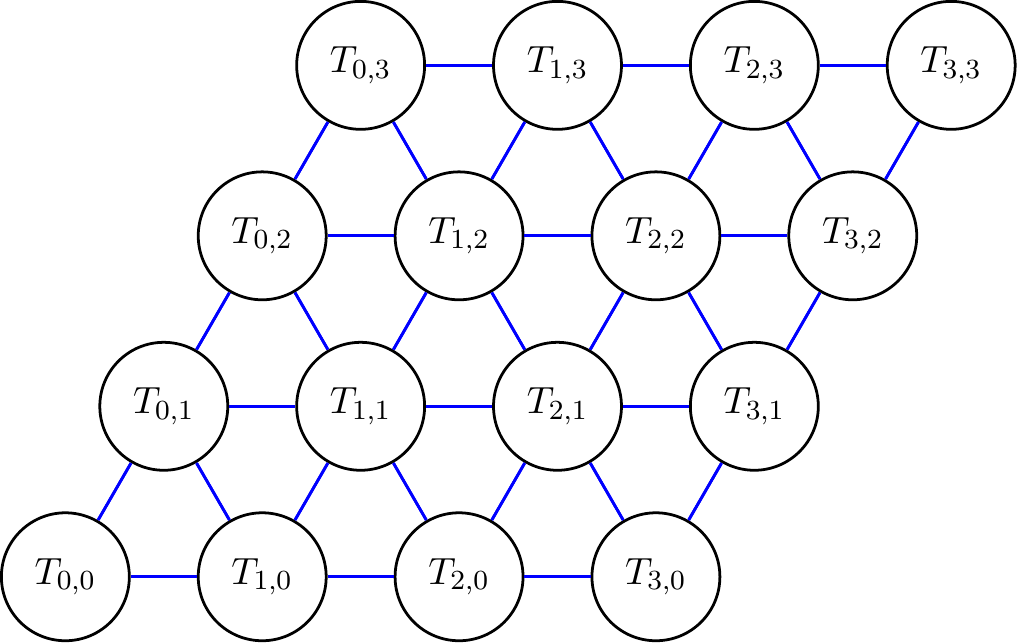}
        \includegraphics[width=0.45\textwidth]{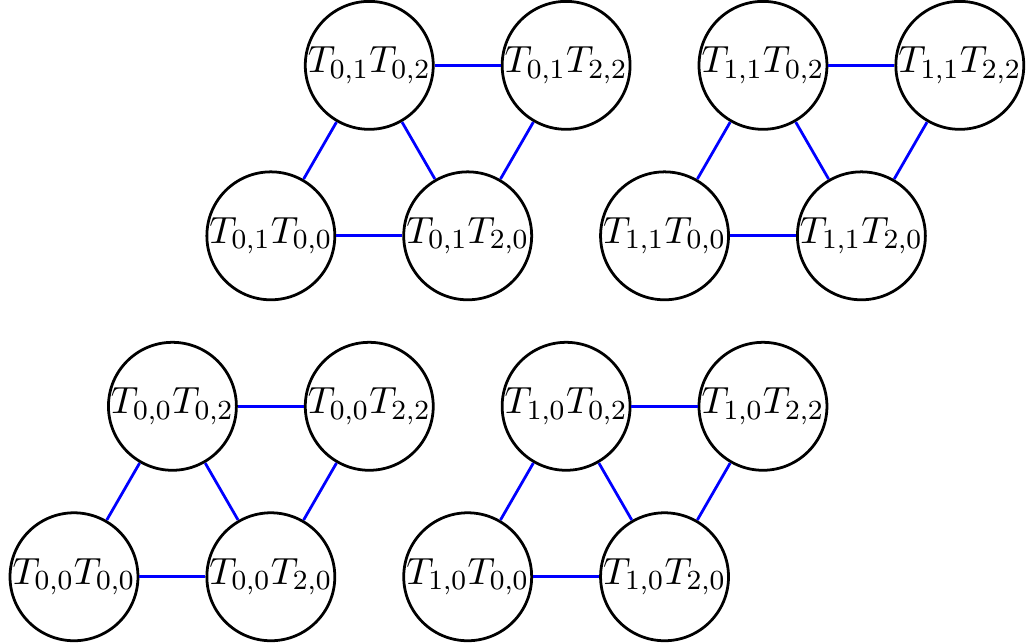}  
        \caption{Visualization of the signal model for Chebyshev
          polynomials of type $A_2$, a directed hexagonal lattice
          (left), and after representing the module as induction
          (right).}
        \label{fig:UndirectedHexagonalLatticeA2}
    \end{figure}
    In \cite{Pueschel.Roetteler:2008} the $n^2$ common zeros of
    $T_{n,0}$ an $T_{0,n}$ were described elementary. We propose a
    geometric description of the common zeros, as this shows the
    geometric mechanisms underlying the decomposition more clearly.
    The preimage of $0$ in the $x-y$-Domain is
    $\frac{1}{3} \omega_1^\vee + \frac{1}{3} \omega_2^\vee$. Through
    the stretching-and-folding property and the condition that the
    common zeros be in the fundamental domain $F$ one obtains
    \begin{equation}
        \label{eq:CommonZerosA2CoweightDescription}
        \begin{split}
            \Variety(\ideal{T_{n,0},T_{0,n}})
            &= \{ \frac{1 + 3 j }{3 n} \omega_1^\vee + \frac{1 + 3
              k}{3 n} \omega_2^\vee \; | \; 2 + 3( j + k) < 3 n\} \\
            &\quad \cup \{ \frac{2 + 3 j}{3 n} \omega_1^\vee +
            \frac{2 + 3 k}{3 n} \omega_2^\vee \; | \; 4 + 3 (j +
            k) < 3 n \},  
        \end{split}
    \end{equation}
    with $ j,k = 0,\dots,n-1$. For each
    $\alpha \in \Variety(\ideal{T_{r,0},T_{0,r}})$ one has
    $|\Variety(\ideal{T_{m,0} - \alpha_1, T_{0,m} - \alpha_2})| =
    m^2$. The geometric mechanism of the distribution of the common
    zeros is illustrated in Fig.~\ref{fig:SkewZerosOfA2}.
    \begin{figure}
        \centering
        \includegraphics[width=0.85\textwidth]{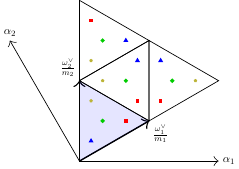}
        \caption{The four classes of common zeros of for the skew
          transforms in case $n = 2 \cdot 2$. The common zeros of
          $\ideal{T_{4,0}, T_{0,4}}$ are shown after being stretched
          by a factor of $2$. The action of the affine Weyl group,
          folding the stretched triangle back to the fundamental
          domain, is indicated. The different colors indicate which
          common zeros of $\ideal{T_{4,0}, T_{0,4}}$ are common zeros
          of which $\ideal{T_{2,0} - \alpha_1, T_{0,2} - \alpha_2}$.}
        \label{fig:SkewZerosOfA2}
    \end{figure}
    Since for $n = r \cdot m$ one thus has
    $|\Variety(\ideal{T_{n,0},T_{0,n}})| = |\Variety(\ideal{T_{r,0},
      T_{0,r}})| \cdot |\Variety(\ideal{T_{m,0},T_{0,m}})|$ and all
    the subalgebras
    $\mathbb{C}[x,y] \big/ \ideal{T_{m,0} - \alpha_1, T_{0,m} -
      \alpha_2}$ are of equal dimension, by
    Prop.~\ref{proposition:ExistenceOfTransversalByDecomposition} any
    basis of $\mathbb{C}[x,y] \big/ \ideal{T_{r,0}, T_{0,r}}$ is a
    transversal of $\mathbb{C}[x,y] \big/ \ideal{T_{m,0}, T_{0,m}}$.
    Since the basis change between the basis
    $(T_{0,0},\dots, T_{n,n})$ and the induction basis is sparse, see
    App.~\ref{subsec:A2TrafoBasisChange} for the concrete form, by
    Prop.~\ref{proposition:ComputationalCostCooleyTukey} the
    Theorem~\ref{theorem:CooleyTukeyTypeAlgorithms} yields a
    $O(n^2 \log(n))$ algorithm. This is substantially faster than the
    naive $O(n^4)$-approach.
\end{example}
\begin{example}
    \label{example:ChebyshevCooleyTukeyC2}%
    In case $C_2$ the shift relation is
    \begin{equation}
        \label{eq:RecurrenceRelationChebyshevC2}
        \begin{split}
            x_1 \cdot T_{k,\ell} &= \tfrac{1}{4} \left(
                T_{k+1,\ell} + T_{k-1,\ell} + T_{k-1,\ell+2} +
                T_{k+1,\ell-2} \right), \\ 
            x_2 \cdot T_{k,\ell} &= \tfrac{1}{4} \left(
                T_{k,\ell+1} + T_{k,\ell-1} + T_{k-1,\ell+1} +
                T_{k+1,\ell-1} \right). 
        \end{split}
    \end{equation}
    A set of sufficient starting conditions for running the
    recurrence relation is
    \begin{equation}
        \label{eq:StartingConditionsC2}
        \begin{split}
            T_{0,0} &= 1, \\
            T_{1,0} &= x_1, \\
            T_{0,1} &= x_2, \\
            T_{1,1} &= 2 x_1 x_2 - x_1.                 
        \end{split}
    \end{equation}
    The weight vector for the total $m$-degree lexicographical
    ordering of the monomials is $(1,2)$. That is $\mdeg(x_1) = 1$ and
    $\mdeg(x_2) = 2$.

    Denote by $\mathbb{T}_n = \{T_{k,\ell} \; | \; k+\ell=n\}$. Then
    one has a three-term recurrence of the form
    \begin{equation}
        \label{eq:ThreeTermRecurrenceRelationC2}
        x_i \mathbb{T}_k = A_{k,i} \mathbb{T}_{k+1} + B_{k,i} \mathbb{T}_k
        + C_{k,i} \mathbb{T}_{k-1},
    \end{equation}
    were the matrices $A_{k,i}, B_{k,i},$ and $C_{k,i}$ can be deduced
    from the shift relations~\eqref{eq:RecurrenceRelationChebyshevC2}. For
    example for the $x_1$ shift one obtains        
    \begin{equation}
        \label{eq:ThreeTermRecurrenceRelationMatricesA}
        A_{k,1} =
        \begin{bmatrix}
            0 & 1/2 & 0 & \dots & & 0 \\
            1/4 & 0 & 1/4 & 0 & \dots & 0 \\
            0 & \ddots & \ddots & \ddots &  & \vdots \\
            \dots & 0 & 1/4 & 0 & 1/4 & 0 \\
            0 & \dots & 0 & 1/2 & 0 & 1/4 
        \end{bmatrix}, 
    \end{equation}
    \begin{equation}
        \label{eq:ThreeTermRecurrenceRelationMatricesB}
        B_{k,1} =
        \begin{bmatrix}
            0 & \dots &  & 0 \\
            \vdots & \ddots & &\vdots \\
            0 & \dots & 0 & 0 \\
            0 & \dots & 1/4 & 0 \\
            0 & \dots & 0 & 0
        \end{bmatrix}, 
    \end{equation}
    and
    \begin{equation}
        \label{eq:ThreeTermRecurrenceRelationMatricesC}
        C_{k,1} =
        \begin{bmatrix}
            0 & 1/2 & 0 & \dots & 0 \\
            1/4 & 0 & 1/4 & \ddots & \vdots  \\
            0 & \ddots & \ddots & \ddots &   0 \\
            & & 1/4 & 0 & 1/4 \\
            \vdots & & 0 & 1/4 & 0 \\
            0 & \dots & & 0 & 1/4 
        \end{bmatrix},
    \end{equation}
    with special case $B_{1,1} =
    \begin{bsmallmatrix}
        1/2 & 0 \\
        0 & 0 
    \end{bsmallmatrix}$. The Christoffel-Darboux
    formula~\eqref{eq:ChristoffelDarboux} can be realized using the
    matrices $H_0 = \tfrac{1}{2}$ and
    $H_k = \mathsf{diag}(\tfrac{1}{8}, \tfrac{1}{16}, \dots,
    \tfrac{1}{16}, \tfrac{1}{8})$.

    We consider the signal model consisting of
    \begin{equation}
        \label{eq:C2ChebyshevSignalModel}
        \begin{split}
            \algebra{A}
            &= \mathbb{R}[x_1,x_2] \big/ \ideal{\mathbb{T}_n}, \\
            M
            &= \algebra{A} \text{ (as regular module)}, \\       
        \end{split}
    \end{equation}
    and
    \begin{equation}
        \label{eq:C2ChebyshevSignalModelZTransform}
        \begin{split}
            \Phi \colon \mathbb{R}^{\frac{n(n+1)}{2}}
            &\longrightarrow M, \\
            \Phi(s)
            &= \sum_{k + \ell < \frac{n (n+1)}{2}} s_{k,\ell}
            T_{k,\ell}. 
        \end{split}
    \end{equation}
    The signal model has a visualization, which resembles a
    triangle, cf. Fig.~\ref{fig:UndirectedC2Lattice}.
    \begin{figure}
        \centering
        \includegraphics[width=0.45\textwidth]{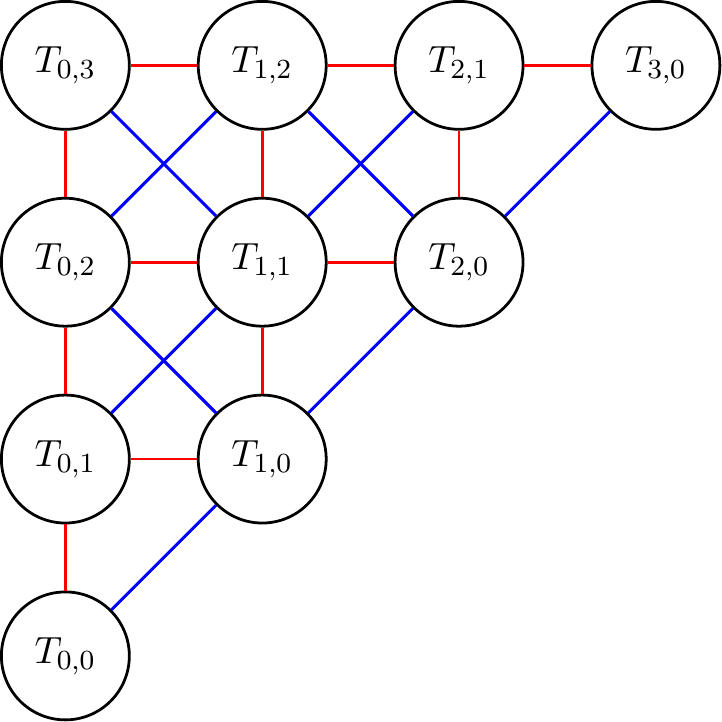}
        \hspace{0.05\textwidth}
        \includegraphics[width=0.45\textwidth]{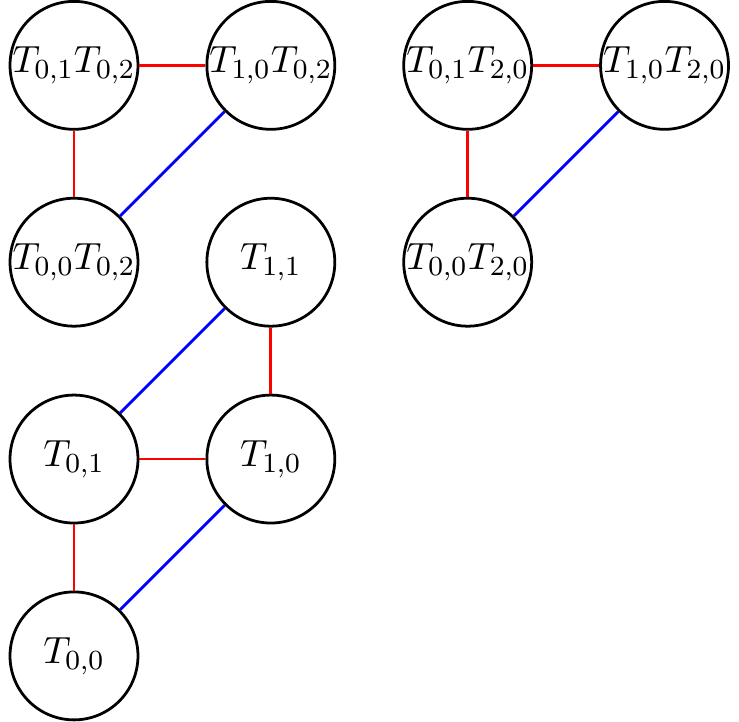} 
        \caption{Visualization of the signal model for Chebyshev
          polynomials of type $C_2$ on the left. The shifts of $x_1$
          are blue and the shifts of $x_2$ are red colored. On the
          right the decomposed lattice after the basis change is
          shown.}
        \label{fig:UndirectedC2Lattice}
    \end{figure}
    In \cite{Seifert.Hueper:2018b} we used an elementary description
    of the common zeros of $\mathbb{T}_n$. Here we present again a
    more geometric point of view, using the coweights. That is the
    common zeros are given as
    \begin{equation}
        \label{eq:CommonZerosC2Coweights}
        \Variety(\ideal{\mathbb{T}_n})
        = \Big\{ \tfrac{2 j + 1}{2 n} \omega_1^\vee + \tfrac{k}{2 n}
        \omega_2^\vee
        \; \Big| \; j,k = 0,\dots,n-1, \; j + k < n \}. 
    \end{equation}
    This results in $\frac{n (n+1)}{2}$ common zeros.

    We derive a fast algorithm in case $n = 2 \cdot m$. Since for
    $n = 2 \cdot m$ it is
    $\frac{n (n+1)}{2} \not= \frac{2 (2+1)}{2} \cdot \frac{m(m+1)}{2}$
    one does not get an induction via the decomposition.

    Due to the decomposition
    property~\ref{proposition:PropertiesMultivariateChebyshev},
    \ref{item:DecompositionProperty}.), the map
    \begin{equation}
        \label{eq:VarietyMappingC2}
        (x_1,x_2) \mapsto (T_{2,0}(x_1,x_2), T_{0,2}(x_1,x_2))
    \end{equation}
    maps the variety $\Variety(\mathbb{T}_n)$ to the variety
    $\Variety(\mathbb{T}_m)$. The map
    \begin{equation}
        \label{eq:ChebyshevStretchingMapC2}
        (x_1,x_2) \mapsto (T_{m,0}(x_1,x_2), T_{0,m}(x_1,x_2) 
    \end{equation}
    stretches the fundamental region $F$ by a factor of $m$ and folds
    it back under the affine Weyl group. After the stretching
    operation one obtains $m^2$ copies of the fundamental region. Thus
    if $\left(\frac{k}{2n}, \frac{j}{4n}\right)$ is in the interior of
    $F$ one obtains $m^2$ common zeros. If
    $\left(\frac{k}{2n}, \frac{j}{4n}\right)$ is on the boundary of
    $F$, that is $k = 0$, always two of the copies of $F$ in the
    interior of the stretched region share these common zeros. Hence
    in this case one only obtains $\frac{m(m+1)}{2}$ common zeros. Of
    the common zeros of $\mathbb{T}_2$ there is one in the interior
    and two on the boundary. Since these are the images under the
    stretching and folding operation one obtains two subalgebras with
    $\frac{m(m+1)}{2}$ common zeros and one subalgebra with $m^2$
    common zeros.
    \begin{figure}
        \centering
        \includegraphics[width=0.95\textwidth]{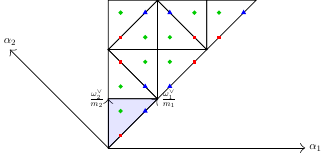}
        \caption{The three classes of common zeros of for the skew
          transforms in case $n = 2 \cdot 3$. The common zeros of
          $\mathbb{T}_6$ are shown after being stretched by a factor
          of $3$.The action of the affine Weyl group, folding the
          stretched triangle back to the fundamental domain, is
          indicated. The different colors indicate which common zeros
          of $\ideal{T_{6,0}, T_{0,6}}$ are common zeros of which
          $\ideal{T_{3,0} - \alpha_1, T_{0,3} - \alpha_2}$.}
        \label{fig:SkewZerosOfC2}
    \end{figure}
    Since the basis change obtained in
    App.~\ref{subsec:C2TrafoBasisChange} is sparse, we obtain a fast
    $O(n^2 \log n)$ algorithm by
    Theorem~\ref{theorem:CooleyTukeyFFTUsingDecomposition}.

    Since the number of common zeros of $\mathbb{T}_n$ equals
    $\dim \polynomials[2]{n-1}$
    Theorem~\ref{theorem:EquivalenceGaussianCubatureAndOrthogonalTransforms}
    implies the existence of an orthogonal transform. Denote the
    diagonal matrix with inverted entries by
    \begin{equation}
        \label{eq:DiagonalMatrixChristoffelEntries}
        D_n = \mathsf{diag}\left(1 \Big/  \left(\mathbb{T}_{n-1}^\top(x)
                H_{n-1}^{-1} A_{n-1,1} \tfrac{\partial}{\partial x_1}
                \mathbb{T}_{n}(x)\right) \right),
    \end{equation}
    and by
    \begin{equation}
        \label{eq:DirectSumHmatrices}
        H_n^{\oplus} = \bigoplus_{k=0}^{n-1} H_k^{-1}
    \end{equation}
    the direct sum of the $H_k^{-1}$ matrices. Reasoning analogously
    to the proof of
    Theorem~\ref{theorem:EquivalenceGaussianCubatureAndOrthogonalTransforms}
    an orthogonal version of the transform is given by
    \begin{equation}
        \label{eq:OrthogonalTrafoC2}
        \Fourier^{\mathsf{orth}} = \sqrt{H_n^{\oplus}} \cdot \Fourier
        \cdot \sqrt{D_n}.
    \end{equation}
    The matrix $H_n^{\oplus}$ is needed here since we do not have
    orthonormal but only orthogonal polynomials.
\end{example}

\begin{acknowledgements}
    This paper is part of the authors PhD thesis under the kind
    supervision of Knut Hüper. The author is grateful to Knut Hüper
    and Christian Uhl for helpful comments on draft versions of this
    paper. The author likes to thank Hans Munthe-Kaas for fruitful
    discussions about multivariate Chebyshev polynomials and the nice
    hospitality during a stay in Bergen.
\end{acknowledgements}

\appendix

\section{Basis changes}
\label{sec:BasisChanges}%

This appendix contains the basis changes for the
examples~\ref{example:ChebyshevCooleyTukeyA2} and
\ref{example:ChebyshevCooleyTukeyC2} of fast Chebyshev transforms with
$n = 2 m$. The Mathematica notebooks available at 
\url{https://github.com/bseifert-HSA/basis-change-Chebyshev-transforms},
which were used to calculate these basis changes, can be
modified to $n = r m$ for any integer $r$ but then the case analysis
becomes even longer. Hence we decided to put only the case $r = 2$
into writing.

Note that $T_{k \cdot m e_i} = T_{k e_i}(T_{m,0},T_{0,m})$ and in this
way is a sum of basis elements of the new basis and some parts which
reduce to a polynomial in the skew zeros.

\subsection{$A_2$ transform}
\label{subsec:A2TrafoBasisChange}%

The basis change is from $(T_{0,0}, \dots, T_{n,n})$ to
$(T_{0,0} T_{0,0} (T_{m,0}, T_{0,m}),\dots,\allowbreak T_{m-1,m-1}
T_{1,1} (T_{m,0},T_{0,m})$. For this one as to distinguish between
four regions of the indices: $(0,0) -- (m,m)$, $(m+1,0) -- (2 m, m)$,
$(0,m+1) -- (m, 2 m)$, and $(m+1,m+1) -- (2 m,2 m)$. Let
$k, \ell < m$.

The orbit of $(k,\ell)$ under the Weyl group $W(A_2)$ is
\begin{equation*}
    \{(-k-\ell,,k), (-k,k+\ell), (-\ell,-k), (\ell,-k-\ell), (k,\ell),
    (k+\ell,-\ell)\}. 
\end{equation*}
Hence one has to distinguish between the cases were $k = 0$,
$\ell = 0$, $k+\ell < m$, $k+\ell = m$, and $k+\ell > m$.

Region I:
\begin{equation*}
    T_{k,\ell} = T_{k,\ell}
\end{equation*}
Region II:
\begin{equation*}
    T_{m+k,\ell} =
    \begin{dcases}
        T_{m,0} & k,\ell = 0 \\
        - 2 T_{m-k,k} + 3 T_{k,0} T_{m,0} & \ell = 0 \\
        -\tfrac{1}{2} T_{m-\ell,0} + \tfrac{3}{2} T_{0,\ell} T_{m,0} &
        k = 0 \\
        - T_{m - k, k + \ell} - T_{m - k - \ell, k} + 3 T_{k,\ell}
        T_{m,0} & k + \ell < m \\
        - T_{0,k} + \tfrac{1}{2} T_{0,m-\ell} - \tfrac{3}{2}
        T_{\ell,0} T_{0,m} + 3 T_{k,\ell} T_{m,0} & k + \ell = m \\
        T_{\ell, 2m - k - \ell} - 3 T_{m-k, k + \ell - m} T_{0,m} + 3
        T_{k,\ell} T_{m,0} & k + \ell > m 
    \end{dcases}    
\end{equation*}
Region III:
\begin{equation*}
    T_{k,m+\ell}
    =
    \begin{dcases}
        T_{0,m} & k,\ell = 0 \\
        - \tfrac{1}{2} T_{0,m-k} + \tfrac{3}{2} T_{k,0} T_{0,m} & \ell
        = 0 \\
        2 T_{\ell,m-\ell} + 3 T_{0,\ell} T_{0,m} & k = 0 \\
        - T_{\ell, m -k -\ell} - T_{k+\ell, m-\ell} + 3 T_{k,\ell}
        T_{0,m} & k+l < m \\
        -\tfrac{1}{2} T_{\ell,0} - \tfrac{3}{2} T_{0,-m-\ell} T_{m,0}
        + 3 T_{k,\ell} T_{0,m} & k+\ell = m \\
        T_{2m - k -\ell,k} - 3 T_{k+\ell-m,m-\ell} T_{m,0} + 3
        T_{k,\ell} T_{0,m} & k+\ell > m 
    \end{dcases}
\end{equation*}
Region IV:
\begin{equation*}
    T_{m+k,m+\ell} = 
    \begin{dcases}
        T_{m,m} & k,\ell = 0 \\
        T_{k,0} - 3 T_{m-k,k} T_{0,m} + 3 T_{k,0} T_{m,m} & \ell = 0
        \\ 
        T_{0,\ell} - 3 T_{\ell,m-\ell} T_{m,0} + 3 T_{0,\ell} T_{m,m}
        & k = 0 \\ 
        \begin{split}
        & 2 T_{k,\ell} - T_{m-\ell, m-k} - 3 T_{m-k,k+\ell} T_{0,m}
        \ldots \\
        &- 3 T_{k+\ell, m-\ell} T_{m,0} + 6 T_{k,\ell} T_{m,m}            
        \end{split} & k+\ell < m
        \\ 
        \begin{split}
            &T_{k,\ell} - \tfrac{3}{2} T_{0,k} T_{0,m} - \tfrac{3}{2}
            T_{\ell,0} T_{0,2m} \ldots \\
            & - \tfrac{3}{2} T_{\ell,0} T_{m,0} -
            \tfrac{3}{2} T_{0,m-\ell} T_{2m,0} + 6 T_{k,\ell} T_{m,m}             
        \end{split} &
        k+\ell = m \\
        \begin{split}
        &- T_{m-\ell,m-k} + 2 T_{k,\ell} - 3 T_{m-k, k+\ell-m} T_{m,0} 
        \ldots \\
        &+ 3 T_{\ell, 2m - k - \ell} T_{0,m} - 3 T_{m-k, k+ \ell - m}
        T_{0,2m}  \ldots \\
        &+ 3 T_{2m - k - \ell,k} T_{m,0} - 3 T_{k+\ell-m,m-\ell}
        T_{2m,0}  \ldots \\
        &  - 3 T_{k+\ell-m,m-\ell} T_{0,m} + 6  T_{k,\ell} T_{m,m}
        \end{split} & k+\ell > m 
    \end{dcases}
\end{equation*}

\subsection{$C_2$ transform}
\label{subsec:C2TrafoBasisChange}%

The basis change is from $(T_{k,\ell} \; | \; k + \ell < n)$ to
$(T_{k,\ell} T_{t,p}(T_{m,0},T_{0,m}) \; | \; k + \ell < m, t+p < 2)
$. For this one as to distinguish between three regions of the
indices: $(0,0) -- (m,m)$, $(m+1,0) -- (2 m, m)$, and
$(0,m+1) -- (m, 2 m)$. Let $k, \ell < m$.

The orbit of $(k,\ell)$ under the Weyl group $W(C_2)$ is
\begin{equation*}
    \{(-k-2 \ell,\ell), (-k-2 \ell,k+\ell), (-k,-\ell), (-k,k+\ell),
    (k,-k-\ell), {k,\ell}, (k+2 \ell,-k-\ell), (k+2 \ell,-\ell) \} 
\end{equation*}
Hence one has to distinguish the cases $\ell = 0$,
$k = 0 \wedge 2 \ell < m$, $k = 0 \wedge 2 \ell = m$,
$k = 0 \wedge 2 \ell > m$, $k + 2 \ell < m$, $k + 2 \ell = m$,
$k + 2 \ell > m$. Note that since for any basis elements index $(t,p)$
one has $t+p < n = 2 m$ one always has $k+\ell < m$ in the sequel.

Region I:
\begin{equation*}
    T_{k,\ell} = T_{k,\ell}
\end{equation*}
Region II:
\begin{equation*}
    T_{m+k,\ell} =
    \begin{dcases}
        T_{m,0} & k,\ell = 0 \\
        -2 T_{-k + m,k} - T_{-k + m,0} + 4 T_{k,0} \cdot T_{m,0} & \ell = 0\\
        -\tfrac{1}{2} T_{-2 \cdot\ell + m,\ell} + 2 T_{0,\ell} \cdot
        T_{m,0}& k = 0, 2 \ell < m \\
        -\tfrac{1}{2} T_{0,\ell} + 2 T_{0,\ell} \cdot T_{m,0}& k = 0,
        2 \ell = m \\
        -\tfrac{1}{2} T_{2 \cdot \ell - m,-\ell + m} + 2 T_{0,\ell}
        \cdot T_{m,0} &  k = 0, 2 \ell > m \\
        \begin{split}
        &- T_{-k + m,k + \ell} - T_{-k - 2 \cdot \ell + m,\ell} \ldots
        \\
        &- T_{-k - 2 \cdot \ell + m,k + \ell} + 4 T_{k,\ell} \cdot
        T_{m,0}             
        \end{split} &
         k + 2 \ell < m \\
         - T_{0,\ell} - T_{0,k + \ell} - T_{2 \cdot \ell,k + \ell} + 4
         T_{k,\ell} \cdot T_{m,0}
         & k + 2 \ell = m \\
         \begin{split}
             & - T_{k + 2 \cdot \ell - m,-\ell + m}
             - T_{k + 2 \cdot \ell - m,-k - \ell + m} \ldots \\
             & - T_{-k + m,k + \ell} 
             + 4 T_{k,\ell} \cdot T_{m,0} 
         \end{split}
         & k + 2 \ell > m
    \end{dcases}
\end{equation*}
Region III:
\begin{equation*}
    T_{k,m+\ell} =
    \begin{dcases}
        T_{0,m} & k,\ell = 0 \\
        - T_{k,-k + m} + 2 T_{k,0} \cdot T_{0,m} & \ell = 0\\
        - T_{0,-\ell + m} - T_{2 \cdot \ell,-\ell + m} + 4 T_{0,\ell}
        \cdot T_{0,m}
        & k = 0, 2 \ell < m \\
        - \frac{1}{2} T_{0,\ell} - T_{0,\ell} T_{m,0} + 4 T_{0,\ell}
        \cdot T_{0,m}
        & k = 0, 2 \ell = m \\
        + T_{0,\ell} + T_{-2 \cdot \ell + 2 \cdot m,\ell} -4 T_{2
          \cdot \ell - m,-\ell + m} \cdot T_{m,0} + 4 T_{0,\ell} \cdot
        T_{0,m}
        & k = 0, 2 \ell > m \\
        - T_{k,-k - \ell + m} - T_{k + 2 \cdot \ell,-\ell + m} - T_{k
          + 2 \cdot \ell,-k - \ell + m} + 4 T_{k,\ell} \cdot T_{0,m}
        & k + 2 \ell < m \\
        -2 T_{0,\ell} \cdot T_{m,0} -2 T_{0,-\ell + m} \cdot T_{m,0} +
        4 T_{k,\ell} \cdot T_{0,m}
        & k + 2 \ell = m \\
        \begin{split}
            & + T_{k,-k - \ell + m}
            + 2 T_{k,\ell}
            + T_{-k - 2 \cdot \ell + 2 \cdot m,k + \ell} \ldots \\
            & -4 T_{k + 2 \cdot \ell - m,-\ell + m} \cdot T_{m,0}
            + T_{-k - 2 \cdot \ell + 2 \cdot m,\ell} \ldots \\
            & -4 T_{k + 2 \cdot \ell - m,-k - \ell + m} \cdot T_{m,0}
            + 4 T_{k,\ell} \cdot T_{0,m} 
        \end{split}
        & k + 2 \ell > m
    \end{dcases}
\end{equation*}

\clearpage
\bibliographystyle{spmpsci}      
\bibliography{Literatur}

\end{document}